\DeclareOldFontCommand{\rm}{\normalfont\rmfamily}{\mathrm}
\DeclareOldFontCommand{\sf}{\normalfont\sffamily}{\mathsf}
\DeclareOldFontCommand{\tt}{\normalfont\ttfamily}{\mathtt}
\DeclareOldFontCommand{\bf}{\normalfont\bfseries}{\mathbf}
\DeclareOldFontCommand{\it}{\normalfont\itshape}{\mathit}
\DeclareOldFontCommand{\sl}{\normalfont\slshape}{\@nomath\sl}
\DeclareOldFontCommand{\sc}{\normalfont\scshape}{\@nomath\sc}
\theoremstyle{plain}
\newtheorem{theorem}{Theorem}[section]
\newtheorem{lemma}[theorem]{Lemma}
\newtheorem{proposition}[theorem]{Proposition}
\newtheorem{corollary}[theorem]{Corollary}
\theoremstyle{definition}
\newtheorem{definition}[theorem]{Definition}
\newtheorem{remark}[theorem]{Remark}
\DeclareMathOperator{\Tr}{Tr}
\title {A Generalization of APN Functions for Odd Characteristic}
\author{
Masamichi Kuroda, 
Shuhei Tsujie}
\date{}
\begin{document}
\maketitle
	
\begin{abstract}
Almost perfect nonlinear (APN) functions on finite fields of characteristic two have been studied by many researchers. 
Such functions have useful properties and applications in cryptography, 
finite geometries and so on. 
However APN functions on finite fields of odd characteristic do not satisfy desired properties. 
In this paper, we modify the definition of APN function in the case of odd characteristic, and study its properties.  
\end{abstract}

{\footnotesize {\it Keywords:} 
APN function, 
Gold function, 
EA-equivalent, 
algebraic degree, 
dual arc,
finite field
}

{\footnotesize {\it 2010 MSC:}
94A60, 
05B25  
}

\section{Introduction}
Let $ F = \mathbb{F}_{p^{n}} $ be a finite field of characteristic $ p $. 
A function $ f \colon F \rightarrow F $ is called \textbf{almost perfect nonlinear} (APN) if the equation 
\begin{align*}
D_{a}f(x) \coloneqq f (x + a) - f (x) = b
\end{align*}
has at most two solutions $x$ in $ F $ for all $ a \in F^{\times} $ and $ b \in F $. 
APN functions on a finite field of characteristic $ 2 $ were introduced by Nyberg \cite{Nyberg1994differentially} and have been studied by many researchers. 
There are a lot of applications in cryptography and finite geometry. 
APN functions for odd characteristic have been investigated by \cite{DMMPW2003apn,HRS1999new} but their algebraic properties is quite different from the case of characteristic $ 2 $. 
In this paper, we give an algebraic generalization of APN functions as follows: 
\begin{definition}
A function $ f \colon F \rightarrow F $ is a \textbf{generalized almost perfect nonlinear}  (GAPN) function if the equation 
\begin{align*}
\tilde{D}_{a}f(x) \coloneqq \sum_{i \in \mathbb{F}_{p}}f(x+ia) = b
\end{align*}
has at most $ p $ solutions $ x $ in $ F $ for all $ a \in F^{\times} $ and $ b \in F $. 
\end{definition}
Note that when $ p=2 $ GAPN functions coincide with APN functions. 
For every $ a, b \in F $ the number of solutions 
\begin{align*}
\tilde{N}_{f}(a,b) \coloneqq \# \Set{x \in F | \tilde{D}_{a}f(x)=b}
\end{align*}
is divisible by $ p $ since if $ x $ is a solution then each element in $ x+\mathbb{F}_{p}a $ is also a solution.  
Therefore we have that 
$f$ is a GAPN function if and only if $\tilde{N}_{f}(a,b)$ equals zero or $p$ for any $a \in F^{\times}$ and $b \in F$. 
If $f$ is linear, for any $a \in F$ we have 
\begin{align*}
\tilde{N}_f (a, b) = \left\{ 
\begin{array}{cl}
0 & (b \not = 0 \ \mbox{and} \ p > 2), \\
p^n & (b = 0 \ \mbox{or} \ (p, b) = (2, f (a))). 
\end{array}
\right.
\end{align*}
Hence we may say that GAPN functions are the farthest from linear functions in view of this parameter. 

Our main resuts are following three theorems 
(see Section \ref{sect:examples}, 
Section \ref{sect:AB} and Section \ref{sect:DA} for the details). 
Firstly, we construct a generalization of the Gold functions which is the most typical APN functions \cite{Gold1968maximal, Nyberg1994differentially}: 
\begin{theorem}\label{thm:main0}
A monomial function $f : F \rightarrow F$ defined by 
\begin{align*}
f (x) = x^{p^i + p-1} \ \ (i > 0 \ \mbox{and} \ \gcd (i, n) = 1). 
\end{align*}
is a GAPN function of algebraic degree $p$. 
\end{theorem}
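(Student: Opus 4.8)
The plan is to treat the two assertions separately: the algebraic degree is immediate, and the GAPN property will follow, after one key computation, from a routine $\gcd$ count.

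For the algebraic degree, reduce $i$ modulo $n$ so that $1 \le i \le n-1$ (legitimate since $\gcd(i,n)=1$ forces $i\not\equiv 0\pmod n$ once $n\ge 2$). Setting $d = p^i+p-1$, one has $0 < d < p^n-1$, so $x^d$ is already in reduced form; its base-$p$ expansion has digit $p-1$ in position $0$ and digit $1$ in position $i$. Since the algebraic degree of a reduced monomial $x^d$ equals the base-$p$ digit sum of $d$, this gives algebraic degree $(p-1)+1 = p$. (For $p=2$ this is the Gold exponent and the claim is classical.)

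The core of the argument will be the identity
\begin{align*}
\tilde{D}_a f(x) = \sum_{c\in\mathbb{F}_p}(x+ca)^d = -a^{p-1}x^{p^i} + a^{p^i+p-2}x \qquad (a\in F),
\end{align*}
valid for odd $p$. I would obtain it by expanding $(x+ca)^d$ via the binomial theorem and summing over $c$, using that $\sum_{c\in\mathbb{F}_p}c^k$ equals $-1$ when $k$ is a positive multiple of $p-1$ and $0$ otherwise; thus only such $k$ contribute. For those, Lucas' theorem forces the base-$p$ digits of $k$ to lie below those of $d$, i.e.\ $k = k_0 + k_i p^i$ with $0\le k_0\le p-1$ and $k_i\in\{0,1\}$; intersecting this list with the congruence $(p-1)\mid k$ leaves exactly $k=p-1$ and $k = p^i+p-2 = d-1$. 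Evaluating the surviving coefficients (again by Lucas) gives $\binom{d}{p-1}\equiv 1$ and $\binom{d}{d-1}=d\equiv -1\pmod p$, which produces the displayed formula. (For $p=2$ an additional constant term $a^d$ appears, but this is the classical APN Gold function \cite{Gold1968maximal, Nyberg1994differentially}, so it may be excluded from the outset.)

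Finally, the map $x\mapsto\tilde{D}_a f(x)$ is $\mathbb{F}_p$-linear (a $p$-polynomial, as the formula shows), so for each $b\in F$ the solution set of $\tilde{D}_a f(x)=b$ is empty or a coset of $\ker\tilde{D}_a f$; it therefore suffices to check that $\#\ker\tilde{D}_a f\le p$ for every $a\in F^{\times}$. For $x\ne 0$ the kernel equation $-a^{p-1}x^{p^i}+a^{p^i+p-2}x=0$ rearranges to $(xa^{-1})^{p^i-1}=1$, which has $\gcd(p^i-1,p^n-1) = p^{\gcd(i,n)}-1 = p-1$ solutions in $F^{\times}$ since $\gcd(i,n)=1$; together with $x=0$ this gives $\#\ker\tilde{D}_a f = p$, hence $\tilde{N}_f(a,b)\in\{0,p\}$ and $f$ is GAPN. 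The step I expect to require the most care is the derivation of the displayed formula, namely correctly bookkeeping which monomials survive the two reductions (the character sum over $c$ and the mod-$p$ reduction of $\binom{d}{k}$ via Lucas) and pinning down their coefficients; once $\tilde{D}_a f$ is in this clean linearized form, the remaining argument is routine.
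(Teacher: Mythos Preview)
Your proof is correct and follows the same overall strategy as the paper: compute $\tilde{D}_a f(x)$ explicitly as an $\mathbb{F}_p$-linearized polynomial in $x$, then show its kernel has size $p$ via the identity $\gcd(p^i-1,p^n-1)=p^{\gcd(i,n)}-1=p-1$. The difference lies in how the key formula is obtained. The paper first proves a more general statement (Proposition~\ref{GGF}) for any monomial $x^{1+p^{i_2}+\cdots+p^{i_p}}$, exploiting Frobenius to write $(x+ja)^d=\prod_{\ell}(x^{p^{i_\ell}}+ja^{p^{i_\ell}})$ and then summing over $j\in\mathbb{F}_p$; your binomial-plus-Lucas computation is more direct for the specific exponent $d=p^i+p-1$ but does not immediately generalize. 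One minor remark: your assertion that the algebraic degree of a reduced monomial $x^d$ \emph{equals} $w_p(d)$ is in fact true under the paper's definition (the coefficient $\prod_s d_s!$ arising from Lemma~\ref{lem:order_algdeg_mono} is nonzero mod $p$ since each digit $d_s\le p-1$), but the paper only records the inequality $d^\circ(f)\le p$ and instead deduces equality \emph{a posteriori} from the GAPN property via Proposition~\ref{lem:alg_criterion2}(2).
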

Secondly, when $p = 3$, we obtain a partial generalization of a relation between APN functions and AB functions introduced in \cite{CV1995links}: 
\begin{theorem}\label{thm:main1}
Suppose that $ p=3 $. 
Let $ f $ be a function of algebraic degree at most $3$ with the condition 
$ f(-x)=-f(x) $ for any $ x \in \mathbb{F}_{3^n} $. 
Then if $f$ is a generalized almost bent function, then $ f $ is a GAPN function. 
Here generalized almost bent functions are defined in Section \ref{sect:AB}. 
\end{theorem}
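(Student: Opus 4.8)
\section*{Proof plan}

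The plan is to run the Fourier-analytic argument of Chabaud and Vaudenay \cite{CV1995links} in characteristic $3$, but using the \emph{third} power moment of the Walsh transform in place of the fourth: this is the moment adapted to $p=3$, because a triple $(x,y,z)\in(\mathbb{F}_{3^n})^3$ satisfies $x+y+z=0$ exactly when it is an arithmetic progression $(x,x+u,x+2u)$ with $u=y-x$, and then $f(x)+f(y)+f(z)=\tilde{D}_{u}f(x)$. Throughout, fix the canonical additive character $\psi(t)=\omega^{\Tr(t)}$ of $\mathbb{F}_{3^n}$ with $\omega=e^{2\pi i/3}$, and set $W_f(a,b)=\sum_{x\in\mathbb{F}_{3^n}}\psi\!\left(bf(x)-ax\right)$.

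First I would draw three preliminary facts out of the hypotheses. (i) Since $f(-x)=-f(x)$ we have $f(0)=0$, and the substitution $x\mapsto -x$ gives $\overline{W_f(a,b)}=W_f(a,b)$, so $W_f$ is real-valued; combined with the defining bound on $|W_f(a,b)|$ for a generalized almost bent function (Section~\ref{sect:AB}), this yields $W_f(a,b)^2\in\{0,3^{n+1}\}$, hence the identity $W_f(a,b)^3=3^{n+1}W_f(a,b)$ for every $a$ and every $b\neq 0$. (ii) Again from oddness, $\tilde{D}_{a}f(0)=f(0)+f(a)+f(2a)=f(a)+f(-a)=0$, so $x=0$ solves $\tilde{D}_{a}f(x)=0$; since $\tilde{N}_f(a,b)$ is a multiple of $3$, it follows that $\tilde{N}_f(a,0)\ge 3$ for all $a$. (iii) As $f$ has algebraic degree at most $3=p$ and $\tilde{D}_{a}$ lowers algebraic degree by at least $p-1$ (the fact also underlying Theorem~\ref{thm:main0}), $\tilde{D}_{a}f$ has algebraic degree at most $1$, so $\tilde{D}_{a}f(x)=L_a(x)+c_a$ with $L_a$ an $\mathbb{F}_3$-linear map; therefore $\tilde{N}_f(a,b)\in\{0,\,|\ker L_a|\}$ for \emph{every} $b$, and since $\tilde{D}_{a}f$ is constant on cosets of $\mathbb{F}_3a$ we have $\mathbb{F}_3 a\subseteq\ker L_a$, so $|\ker L_a|\ge 3$ when $a\neq 0$.

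Next I would compute $\sum_{a}\sum_{b}W_f(a,b)^3$ in two ways. Expanding the cube and summing over $a\in\mathbb{F}_{3^n}$ forces $x+y+z=0$; applying the arithmetic-progression reparametrisation and then summing over $b$ gives $\sum_{a,b}W_f(a,b)^3=3^{2n}\sum_{a}\tilde{N}_f(a,0)$. On the other hand, isolating $b=0$ (where $W_f(a,0)=3^n$ if $a=0$ and $0$ otherwise) and using fact (i),
\[
\sum_{a,b}W_f(a,b)^3 = 3^{3n}+3^{n+1}\sum_{a}\sum_{b\neq 0}W_f(a,b) = 3^{3n}+3^{n+1}\left(3^{2n}-3^{n}\right),
\]
where $\sum_{a}W_f(a,b)=3^{n}$ for every $b$ because $f(0)=0$. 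Equating the two values gives $\sum_{a}\tilde{N}_f(a,0)=4\cdot 3^{n}-3$; removing the term $\tilde{N}_f(0,0)=3^{n}$ (note $\tilde{D}_0 f\equiv 0$) leaves $\sum_{a\neq 0}\tilde{N}_f(a,0)=3\left(3^{n}-1\right)$. This is a sum of $3^{n}-1$ terms, each at least $3$ by fact (ii), so $\tilde{N}_f(a,0)=3$ for every $a\neq 0$; hence $|\ker L_a|=3$, and by the dichotomy in fact (iii), $\tilde{N}_f(a,b)\in\{0,3\}$ for all $a\neq 0$ and all $b$. Thus $f$ is GAPN.

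Once the cubic moment is identified the computation is essentially forced, so I expect the only real friction to be organisational: translating the identity $W_f(a,b)^3=3^{n+1}W_f(a,b)$ into, and out of, the exact definition of ``generalized almost bent'' given in Section~\ref{sect:AB}, and keeping the $b=0$ contributions consistent through the character-sum manipulations. The conceptual point --- and the reason this is only a \emph{partial} analogue of ``AB $\Rightarrow$ APN'' --- is that the third moment constrains only the fibre $\tilde{D}_{a}f=0$; it is the degree hypothesis $\deg f\le 3$, via the affineness of $\tilde{D}_{a}f$ in fact (iii), that propagates this control to all values $b$, while $f(-x)=-f(x)$ is what both makes $W_f$ real and upgrades $\tilde{N}_f(a,0)\ge 0$ to $\tilde{N}_f(a,0)\ge 3$.
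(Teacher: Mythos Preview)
Your proof is correct and follows essentially the same route as the paper. The paper packages the cubic Walsh moment computation as a separate characterization (Proposition~\ref{GAB equiv}) giving $S^{(3)}_{0,0}=4\cdot 3^{n}-3$ and then argues by contradiction that a non-GAPN $f$ would produce an extra solution of $x_1+x_2+x_3=0$, $f(x_1)+f(x_2)+f(x_3)=0$; you instead reparametrize that system as $\tilde D_u f(x)=0$ from the outset, obtain $\sum_{a\neq 0}\tilde N_f(a,0)=3(3^n-1)$ directly, and close with the pigeonhole inequality $\tilde N_f(a,0)\ge 3$ together with the affineness of $\tilde D_a f$. These are the same computation and the same three ingredients (cubic moment, oddness, $d^{\circ}(f)\le 3$) in slightly different packaging; your organization is arguably a bit more streamlined since it avoids the detour through the full $S^{(3)}_{a,b}$ formula. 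One small remark: the identity $W_f(a,b)^3=3^{n+1}W_f(a,b)$ already follows immediately from the paper's definition $W_f(a,b)\in\{0,\pm 3^{(n+1)/2}\}$, so your appeal to oddness for real-valuedness in step~(i) is not strictly needed there (though oddness is essential in steps~(ii) and for $f(0)=0$).
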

Thirdly, we obtain a generalization of the construction of dual arcs associated with 
APN functions introduced in \cite{Yoshiara2008dimensional}: 
\begin{theorem}\label{thm:main2}
We can construct dual arcs with GAPN functions of algebraic degree $ p $. 
\end{theorem}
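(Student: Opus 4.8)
The plan is to mimic Yoshiara's construction \cite{Yoshiara2008dimensional} of a dimensional dual hyperoval from a quadratic APN function, with $\tilde{D}_a$ playing the role of the first-order derivative $D_a$, and then to repair the one place where odd characteristic genuinely behaves differently. Recall that a \emph{dual arc} is a family $\mathcal{S}$ of subspaces of a fixed $\mathbb{F}_p$-vector space $V$, all of the same dimension, such that any two distinct members meet in a $1$-dimensional subspace and any three distinct members meet only in $\{0\}$. First I would normalise $f$: replacing $f$ by $f-f(0)$ affects neither the GAPN property nor the algebraic degree, and replacing $f$ by its algebraic-degree-$p$ homogeneous part changes $\tilde{D}_a f$ only by an additive constant (for degree reasons $\tilde{D}_a$ annihilates the part of $f$ of algebraic degree $<p$ up to a constant), hence preserves the GAPN property; so we may assume $f$ is homogeneous of algebraic degree $p$ with $f(0)=0$.

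The structural input comes from the degree hypothesis. Since $\binom{p-1}{j}\equiv(-1)^{j}\pmod p$ one checks that $\tilde{D}_a f=D_a^{\,p-1}f$, and as each $D_a$ lowers the algebraic degree, $\tilde{D}_a f$ has algebraic degree at most $1$ in $x$; hence $\beta(x,a)\coloneqq\tilde{D}_a f(x)-\tilde{D}_a f(0)$ is $\mathbb{F}_p$-linear in $x$. It depends only on the line $L=\mathbb{F}_p a$, since $\tilde{D}_{\lambda a}f=\tilde{D}_a f$ for $\lambda\in\mathbb{F}_p^{\times}$; write $\beta_L$ for it. Moreover $\ker\beta_L$ is the solution set of $\tilde{D}_a f(x)=\tilde{D}_a f(0)$, which is nonempty and invariant under translation by $\mathbb{F}_p a$, so the GAPN hypothesis ($\tilde{N}_f(a,\cdot)\in\{0,p\}$) forces $\ker\beta_L=\mathbb{F}_p a$, of dimension exactly $1$. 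I would then put $V=F\oplus F$, $X(L)\coloneqq\{(x,\beta_L(x)):x\in F\}$, $X_{\infty}\coloneqq F\oplus\{0\}$, and $\mathcal{S}\coloneqq\{X(L):L\le F\text{ a line}\}\cup\{X_{\infty}\}$, a collection of $\frac{p^{n}-1}{p-1}+1$ pairwise distinct $n$-dimensional subspaces. When $p=2$ this is exactly Yoshiara's dual hyperoval, $\beta$ being the bilinear form $(x,a)\mapsto f(x+a)+f(x)+f(a)$.

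The triple-intersection axiom for $\mathcal{S}$ is routine once the pairwise behaviour is known: if $(x,y)$ lies in three distinct members then $x$ lies in $\ker(\beta_L-\beta_{L'})\cap\ker(\beta_L-\beta_{L''})$, and $\beta_L(0)=0$ together with $1$-dimensionality of the pairwise intersections forces $x=0$; the member $X_{\infty}$ is handled the same way. The crux, which I expect to be the main obstacle, is the pairwise-intersection axiom $\dim\ker(\beta_L-\beta_{L'})=1$ for $L\neq L'$. In characteristic $2$ this is free, because $\beta$ is bilinear, so $\beta_L-\beta_{L'}=\beta_{L+L'}$ and one is reduced to the one-line computation $\dim\ker\beta_{L+L'}=1$. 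In odd characteristic $\beta(x,a)$ is homogeneous of degree $p-1$ in $a$, \emph{not} linear, so this reduction is unavailable; one even sees in small cases that $\dim\ker(\beta_L-\beta_{L'})$ can drop to $0$, so the naive family above is not yet a dual arc. The remedy must involve more than the diagonal restriction $\beta$: the natural extra datum is the full symmetric $p$-linear form $\Theta_f(a_1,\dots,a_p)\coloneqq D_{a_1}\cdots D_{a_p}f$ attached to $f$, which — and here the degree hypothesis is essential — is genuinely $\mathbb{F}_p$-multilinear. One should build $\mathcal{S}$ inside an ambient space that records enough of $\Theta_f$ for differences of members to be linear in the line parameter; then, using the symmetry of $\Theta_f$ and the telescoping identity
\[
a^{\otimes(p-1)}-b^{\otimes(p-1)}=\sum_{k=0}^{p-2}b^{\otimes k}\otimes(a-b)\otimes a^{\otimes(p-2-k)},
\]
the pairwise-intersection dimension should reduce to a quantity governed by the kernels $\ker\beta_L$, hence by GAPN. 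Identifying the precise ambient space and maps for which this dimension is exactly $1$ for all distinct lines is the heart of the matter; once it is done, $\mathcal{S}$ is a dual arc — and a dimensional dual hyperoval if it additionally spans $V$ — which is the assertion of the theorem.
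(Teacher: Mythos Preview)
Your proposal does not actually finish the construction. You correctly identify that the naive family $\{X(L)\}$ fails the pairwise axiom because $\beta(x,a)=\tilde{B}_f(x,a)$ is homogeneous of degree $p-1$ in $a$ rather than linear, and you gesture at a remedy involving the full symmetric $p$-linear form $\Theta_f$ in some larger ambient space --- but you never say what that space is, what the subspaces in it are, or why the pairwise intersections there have dimension exactly $1$. The telescoping identity you write down expresses a difference of $(p-1)$-th tensor powers as a sum of $p-1$ terms, each linear in $a-b$; it does not by itself give a single linear map whose kernel you can control via GAPN. So the ``heart of the matter'' is stated, not solved, and the proof as written has a genuine gap.

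The paper's resolution is completely different and considerably less ambitious. Rather than enlarge the ambient space, it \emph{twists} $\tilde{B}_f$ by auxiliary data: a family $\mu=(\mu_a)_{a\in F^\times}$ of $\mathbb{F}_p$-linear automorphisms of $F$ and a permutation $\nu$ of $F$ fixing $0$, and sets $\tilde{B}_{f,\mu,\nu}(x,a)\coloneqq\mu_a\bigl(\tilde{B}_f(x,\nu(a))\bigr)$. The point is that for a monomial $f(x)=x^{d}$ of algebraic degree $p$ one can choose $\mu_a(x)=a^{d}x$, $\nu(a)=a^{-1}$ (and for the generalized Gold functions there is a second natural choice) so that $\tilde{B}_{f,\mu,\nu}$ becomes genuinely \emph{bilinear}; then $\tilde{B}_{f,\mu,\nu}(x,a)-\tilde{B}_{f,\mu,\nu}(x,b)=\tilde{B}_{f,\mu,\nu}(x,a-b)$ and GAPN gives $\ker\tilde{B}_{f,\mu,\nu}(\,\cdot\,,a-b)=\mathbb{F}_p\,\nu(a-b)$, which is $1$-dimensional. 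The subspaces are indexed by points $a$ in a subset $M\subset F$ (not by lines, and there is no $X_\infty$), and one must further restrict $M$ so that no three of its points are $\mathbb{F}_p$-collinear; this is what makes the triple intersections trivial, via the equivalence ``$a,b,c$ collinear $\Leftrightarrow$ $\nu(a-b),\nu(a-c)$ linearly dependent''. Thus the paper proves the theorem only for those GAPN functions for which such a bilinearizing twist exists --- in particular the monomial ones --- and stays inside $F\oplus F$; it does not attempt a construction valid for an arbitrary GAPN function of algebraic degree $p$.
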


This paper is organized as follows. 
In Section \ref{sect:criterions}, 
we give several characterizations for GAPN functions, which are generalizations of classical results for APN functions on $ \mathbb{F}_{2^{n}} $.  
In Section \ref{sect:examples}, 
we raise two examples of GAPN functions. 
One is the inverse permutation and the other is a generalization of the Gold functions. 
In Section \ref{sect:AB}, 
we define a generalization of almost bent functions and prove the Theorem \ref{thm:main1}.  
In Section \ref{sect:DA}, 
we construct dual arcs with GAPN functions of algebraic degree $ p $. 

\section{Characterizations of GAPN functions}\label{sect:criterions}

\subsection{The property of stability of GAPN functions}

Two functions $f$ and $g$ are called \textbf{extended affine equivalent}
(EA-equivalent) if $g = A_1 \circ f \circ A_2 + A_0$, where $ A_{1} $ and $ A_{2} $ are affine permutation and $ A_0 $ is an affine function. 
We see that EA-equivalence preserves the set 
\begin{align*}
{\cal N}_f := \Set{ \tilde{N}_{f} (a, b) | a \in F^{\times}, b \in F }. 
\end{align*}
The following proposition is a generalization of \cite[Proposition 1]{Nyberg1994differentially}. 

\begin{proposition}
Let $f$, $g \colon F \rightarrow F$ be EA-equivalent functions. 
Then ${\cal N}_f = {\cal N}_g$. 
In particular, $f$ is a GAPN function if and only if $g$ is a GAPN function. 
\end{proposition}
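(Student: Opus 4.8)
The plan is to show that the operator $\tilde{D}_a$ transforms nicely under composition with affine maps, so that the solution-count multiset $\mathcal{N}_f$ is unchanged when we replace $f$ by $A_1 \circ f \circ A_2 + A_0$. Since any EA-equivalence is a composition of three elementary moves --- post-composition with an affine permutation $A_1$, pre-composition with an affine permutation $A_2$, and addition of an affine function $A_0$ --- it suffices to handle each move separately and then combine.

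First I would record the key identity: for an affine map $A(x) = L(x) + c$ with $L$ additive (i.e.\ $\mathbb{F}_p$-linear, since additive maps on $F$ are $\mathbb{F}_p$-linear), one has $\tilde{D}_a(L)(x) = \sum_{i \in \mathbb{F}_p} L(x+ia) = \big(\sum_{i\in\mathbb{F}_p} 1\big) L(x) + \big(\sum_{i \in \mathbb{F}_p} i\big) L(a) = 0$, because $\sum_{i\in\mathbb{F}_p} 1 = p = 0$ in $F$ and $\sum_{i\in\mathbb{F}_p} i = 0$ in $\mathbb{F}_p$ for every prime $p$. Hence $\tilde{D}_a(A_0) \equiv 0$, so for $g = f + A_0$ we get $\tilde{D}_a g = \tilde{D}_a f$, giving $\tilde{N}_g(a,b) = \tilde{N}_f(a,b)$ for all $a,b$, and thus $\mathcal{N}_g = \mathcal{N}_f$.

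Next, for post-composition, write $A_1(y) = L_1(y) + c_1$ with $L_1$ an additive permutation. Then $\tilde{D}_a(A_1\circ f)(x) = \sum_i \big(L_1(f(x+ia)) + c_1\big) = L_1\!\big(\sum_i f(x+ia)\big) + p c_1 = L_1(\tilde{D}_a f(x))$, again using $p = 0$ in $F$ and additivity of $L_1$. Since $L_1$ is a bijection, $\tilde{D}_a(A_1\circ f)(x) = b \iff \tilde{D}_a f(x) = L_1^{-1}(b)$, so $\tilde{N}_{A_1\circ f}(a,b) = \tilde{N}_f(a, L_1^{-1}(b))$; as $b$ ranges over $F$ so does $L_1^{-1}(b)$, hence $\mathcal{N}_{A_1\circ f} = \mathcal{N}_f$. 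For pre-composition, write $A_2(x) = L_2(x) + c_2$ with $L_2$ an additive permutation, and substitute: $\tilde{D}_a(f\circ A_2)(x) = \sum_i f(L_2(x+ia) + c_2) = \sum_i f(L_2(x)+c_2 + i L_2(a)) = \tilde{D}_{a'} f(x')$ where $x' = L_2(x)+c_2$ and $a' = L_2(a)$. Note $a' \in F^\times$ since $a \in F^\times$ and $L_2$ is a permutation fixing $0$. As $x$ ranges over $F$, $x'$ ranges bijectively over $F$, so $\tilde{N}_{f\circ A_2}(a,b) = \tilde{N}_f(a',b)$ with $a' \in F^\times$; letting $a$ vary over $F^\times$ makes $a'$ vary over $F^\times$, whence $\mathcal{N}_{f\circ A_2} = \mathcal{N}_f$.

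Combining the three cases, for general $g = A_1\circ f\circ A_2 + A_0$ we chain the equalities $\mathcal{N}_g = \mathcal{N}_{A_1 \circ f \circ A_2} = \mathcal{N}_{f \circ A_2} = \mathcal{N}_f$, which proves $\mathcal{N}_f = \mathcal{N}_g$. The ``in particular'' clause is immediate from the criterion stated in the introduction: $f$ is GAPN iff $\mathcal{N}_f \subseteq \{0, p\}$, a condition depending only on the set $\mathcal{N}_f$. I do not expect a genuine obstacle here; the only point requiring care is the bookkeeping that ensures $b$ (resp.\ $a$) still ranges over all of $F$ (resp.\ $F^\times$) after the substitutions, so that we get equality of the full sets $\mathcal{N}_f$ and $\mathcal{N}_g$ rather than just an inclusion --- and the small observation that $\sum_{i\in\mathbb{F}_p} i = 0$ holds for $p=2$ as well (where it reads $0+1 = 1 = -1$), so the additive-map computation is uniform in $p$.
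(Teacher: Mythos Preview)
Your overall approach matches the paper's: both compute how $\tilde{D}_a$ transforms under the affine operations, the only difference being that you treat the three moves (post-composition, pre-composition, additive shift) separately while the paper handles $g = A_1 \circ f \circ A_2 + A_0$ in a single combined calculation.

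There is, however, a genuine slip in your treatment of the additive term. Your claim that $\sum_{i\in\mathbb{F}_p} i = 0$ holds for $p=2$ is false: the sum is $0+1 = 1 \neq 0$ in $\mathbb{F}_2$, and the observation $1 = -1$ does not make it zero. Consequently, for $p=2$ one has $\tilde{D}_a(A_0)(x) = \alpha_0(a)$ rather than $0$, so the pointwise equality $\tilde{N}_{f+A_0}(a,b) = \tilde{N}_f(a,b)$ fails. The paper avoids this by never claiming the sum vanishes: it carries $r \coloneqq \sum_{i \in \mathbb{F}_p} i$ symbolically, obtains $\tilde{D}_a(A_0)(x) = \alpha_0(a)\,r$ (a constant in $x$), and then deduces $\tilde{N}_g(a,b) = \tilde{N}_f\bigl(\alpha_2(a),\, \alpha_1^{-1}(b-\alpha_0(a)r)\bigr)$. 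In your modular framework the fix is immediate: replace ``$\tilde{D}_a(A_0) \equiv 0$'' by ``$\tilde{D}_a(A_0)$ is constant in $x$, equal to $\alpha_0(a)r$'', conclude $\tilde{N}_{f+A_0}(a,b) = \tilde{N}_f(a,\, b - \alpha_0(a)r)$, and note that $b \mapsto b - \alpha_0(a)r$ is a bijection of $F$. For odd $p$ your computation is correct as written, since then $r = p(p-1)/2 \equiv 0 \pmod p$.
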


\begin{proof}
By definition, we have 
$g = A_1 \circ f \circ A_2 + A_0$ for some affine permutations $ A_{1},A_{2} $ and affine function $ A_{0} $. 
For each $ i \in \Set{0,1,2} $, we may put $A_i = \alpha_i + c_i$, 
where $\alpha_i$ is a linear function on $F$ and $c_i \in F$. 
Then $\alpha_1$ and $\alpha_2$ are bijective. 
We have 
\begin{align*}
\sum_{i \in \mathbb{F}_p} A_0 (x + i a) 
= \sum_{i \in \mathbb{F}_p} \left( \alpha_0 (x + i a) + c_0 \right) 
= \alpha_{0}(a)\sum_{i \in \mathbb{F}_{p}}i = \alpha_{0}(a)r  
\end{align*}
for any $ a \in F^{\times} $, where $ r $ denotes $ \sum_{i \in \mathbb{F}_{p}}i $. 
Then we obtain 
\begin{align*}
\tilde{D}_a g (x) 
&= \sum_{i \in \mathbb{F}_p} \left( A_1 \circ f \circ A_2 + A_0 \right) (x + i a) 
= \sum_{i \in \mathbb{F}_p} \left( 
\alpha_1 \left( f (\alpha_2 (x + i a) + c_2 ) \right) + c_1 \right) + \alpha_{0}(a)r \\
&= \alpha_1 \left( \sum_{i \in \mathbb{F}_p} 
f \left( A_{2}(x) + i \alpha_2 (a) \right) \right) + \alpha_{0}(a)r
= \alpha_1 \left( \tilde{D}_{\alpha_2 (a)} f (A_2(x)) \right) + \alpha_{0}(a)r. 
\end{align*}
Hence for any $ a \in F^{\times} $ and $b \in F$, $\tilde{D}_a g (x) = b$ if and only if 
$\tilde{D}_{\alpha_2 (a)} f (A_2 (x)) = \alpha_1^{-1} (b-\alpha_{0} (a)r)$. 
Since $A_2$ is a permutation, we obtain 
$\tilde{N}_g (a, b) = \tilde{N}_f (\alpha_2 (a), \alpha_1^{-1} (b-\alpha_{0}(a)r))$ for any $a \in F^{\times}$ and 
$b \in F$. Thus ${\cal N}_f = {\cal N}_g$. 
\end{proof}

\begin{remark}
In \cite{CCZ1998codes}, 
Carlet, Charpin and Zinoviev showed that
EA-equivalence is a particular case of CCZ-equivalence 
and every permutation is CCZ-equivalent to its inverse. 
Here CCZ-equivalence corresponds to the affine equivalence of the graphs of functions, 
that is, functions $f$ and $g$ are CCZ-equivalent if and only if, for some affine permutation, 
the image of the graph of $f$ is the graph of $g$. 
When $p=2$, for any CCZ-equivalent functions $f$ and $g$, 
we have ${\cal N}_f = {\cal N}_g$. In particular, $f$ is APN if and only if $g$ is APN 
\cite{BCP2006new}. 
Unfortunately, this property is not extended for GAPN functions. 
For example, the function $f \colon \mathbb{F}_{3^5} \to \mathbb{F}_{3^5}$ defined by 
$f(x) = x^{57}$, which is the composition of $f_1(x) = x^{19}$ and the Frobenius mapping 
${\rm Fb} (x) = x^3$, is a GAPN function, since ${\rm Fb}$ is linear 
and $f_1$ is a GAPN function by 
Proposition \ref{GGF}. However we can check easily that the inverse function 
$f^{-1} (x) = x^{17}$ is not a GAPN function (see Remark \ref{bad points}, (1) for the detail). 
\end{remark}

\subsection{GAPN functions of algebraic degree $ p $}
For a function $ f \colon F \rightarrow F $ and a positive integer $ m $, we define a function 
\begin{gather*}
[f]^{m} \colon F^{m} \longrightarrow F, \ \ 
[f]^{m}(x_{1}, \dots, x_{m}) \coloneqq \sum_{I \subset [m]}(-1)^{m-|I|}f\left(\sum_{i \in I}x_{i}\right), 
\end{gather*}
where $ [m] $ denotes the set $ \{1, \dots, m\} $. 
We also define $ [f]^{0} \coloneqq f(0) $. 
For example 
\begin{gather*}
[f]^{1}(x) = f(x)-f(0), \ \ [f]^{2}(x,y) = f(x+y)-f(x)-f(y)+f(0), \\
[f]^{3}(x,y,z) = f(x+y+z)-f(x+y)-f(x+z)-f(y+z) \\ +f(x)+f(y)+f(z)-f(0). 
\end{gather*}
It is easy to verify the following: 
\begin{proposition}\label{prop:mult_rec}
Let $ m $ be a positive integer. 
Then
\begin{align*}
& \quad [f]^{m+1}(x,y,z_{1}, \dots, z_{m-1}) \\
&= [f]^{m}(x+y,z_{1}, \dots, z_{m-1}) - [f]^{m}(x,z_{1}, \dots, z_{m-1}) - [f]^{m}(y,z_{1},\dots, z_{m-1})
\end{align*}
for any $ x,y,z_{1}, \dots, z_{m-1} \in F $. 
\end{proposition}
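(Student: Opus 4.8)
The plan is to reduce the identity to the elementary operator relation for the difference operator. For $a \in F$ and $g \colon F \to F$ write $\Delta_{a} g (t) \coloneqq g(t+a) - g(t)$. First I would record the standard expansion: for any $u_{1}, \dots, u_{m} \in F$,
\[
(\Delta_{u_{1}} \circ \cdots \circ \Delta_{u_{m}} f)(t) = \sum_{I \subset [m]} (-1)^{m - |I|} f\!\left( t + \sum_{i \in I} u_{i} \right),
\]
which follows by an immediate induction on $m$ straight from the definition of $\Delta$: applying one more operator either adds the new element to the argument (sign $+1$) or leaves it out (sign $-1$). Putting $t = 0$ gives $[f]^{m}(u_{1}, \dots, u_{m}) = (\Delta_{u_{1}} \circ \cdots \circ \Delta_{u_{m}} f)(0)$, and since the operators $\Delta_{a}$ pairwise commute the order of composition is irrelevant.

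Granting this, the left-hand side of the proposition equals $(\Delta_{z_{1}} \circ \cdots \circ \Delta_{z_{m-1}} \circ \Delta_{x} \circ \Delta_{y} \, f)(0)$. I would then check the operator relation $\Delta_{x} \circ \Delta_{y} = \Delta_{x+y} - \Delta_{x} - \Delta_{y}$ by a one-line computation: applied to any $g$, both sides give the function $t \mapsto g(t + x + y) - g(t+x) - g(t+y) + g(t)$. Since each $\Delta_{a}$, and hence any composite of such operators, is additive on functions, I may substitute this relation and distribute $\Delta_{z_{1}} \circ \cdots \circ \Delta_{z_{m-1}}$ over the three summands; evaluating at $0$ and using the first paragraph, the three resulting terms are exactly $[f]^{m}(x+y, z_{1}, \dots, z_{m-1})$, $-[f]^{m}(x, z_{1}, \dots, z_{m-1})$ and $-[f]^{m}(y, z_{1}, \dots, z_{m-1})$, which is the right-hand side.

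If one prefers to avoid the operator formalism, the same proof can be run combinatorially: relabel the $m+1$ arguments as $w_{1} = x$, $w_{2} = y$, $w_{2+k} = z_{k}$, expand the left-hand side from the definition as $\sum_{I \subset [m+1]} (-1)^{m+1-|I|} f(\sum_{i \in I} w_{i})$, and split the sum according to the four possibilities for $I \cap \{1, 2\}$. Writing $S$ for the corresponding subset of $\{z_{1}, \dots, z_{m-1}\}$, $j = |S|$ and $s_{S}$ for its sum, the four contributions attached to a fixed $S$ regroup as $(-1)^{m-1-j}\bigl( f(x+y+s_{S}) - f(x+s_{S}) - f(y+s_{S}) + f(s_{S}) \bigr)$; expanding the right-hand side the same way — splitting each $[f]^{m}$ according to whether its first slot is used — yields precisely this grouping for every $S$, and summing over $S$ closes the argument.

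The computations are routine, and I do not expect a genuine obstacle; the only point requiring care is the bookkeeping — consistently indexing the $z$-variables and keeping track of the shift of the sign exponent from $(-1)^{m+1-|I|}$ at arity $m+1$ down to $(-1)^{m-|K|}$ at arity $m$, equivalently making sure the pairwise-commuting $\Delta$-operators are counted correctly.
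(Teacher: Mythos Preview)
Your proposal is correct; the combinatorial alternative you sketch at the end --- splitting the defining sum for $[f]^{m+1}$ according to the four possibilities for $I\cap\{1,2\}$ and regrouping --- is exactly the paper's own (commented-out) argument, and your operator formulation $\Delta_{x}\circ\Delta_{y}=\Delta_{x+y}-\Delta_{x}-\Delta_{y}$ is just a tidy repackaging of that same computation. No gaps.
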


Every function $ f \colon F \rightarrow F $ can be represented uniquely by a polynomial 
$\displaystyle 
f(x) = \sum_{d = 0}^{p^n-1} c_d x^d \in F [x]$. Each $d$ has the $p$-adic expansion 
$\displaystyle 
d = \sum_{s = 0}^{n-1} d_s p^s
$, where $0 \leq d_s < p$. 
Let $w_p (d)$ denote the integer $\displaystyle 
d = \sum_{s = 0}^{n-1} d_s$, and we call it the \textbf{$p$-weight} of $d$. 
Then we can write $d=p^{s_1} + \cdots + p^{s_w}$, where $w = w_p (d)$ and it is not necessary that $s_i$'s are distinct. 

\begin{lemma}\label{lem:order_algdeg_mono}
For any integer $m$ such that $m \geq w$, we have 
\begin{align} \label{want lem:order_algdeg_mono}
\left[ x^d \right]^m = \left\{ 
\begin{array}{cl}
0 & (m \geq w + 1), \\
\displaystyle \sum_{\sigma \in {\mathfrak S}_m} 
x_1^{p^{s_{\sigma(1)}}} \cdots x_m^{p^{s_{\sigma(m)}}}
& (m = w), \\
\end{array}
\right.
\end{align}
where ${\mathfrak S}_m$ is the symmetric group of degree $m$. 
In particular, we have 
$[f]^m = 0$ if $m > \max \Set{ w_p (d) | c_d \ne 0 }$. 
\end{lemma}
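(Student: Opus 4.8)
The plan is to expand $[x^d]^m$ directly, exploiting that the Frobenius map is additive in characteristic $p$, and then to evaluate the resulting coefficients by an inclusion--exclusion over the subsets $I \subseteq [m]$. One could instead induct on $m$ via Proposition~\ref{prop:mult_rec}, but substituting $x+y$ into the order-$m$ formula just reproduces the same computation, so the direct route seems cleaner.

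First I would write $d = p^{s_1} + \cdots + p^{s_w}$ and observe that, since $\operatorname{char} F = p$, for every $I \subseteq [m]$
\begin{align*}
\left(\sum_{i \in I} x_i\right)^{\!d} = \prod_{j=1}^{w}\left(\sum_{i \in I} x_i^{p^{s_j}}\right) = \sum_{\varphi \colon [w] \to I} \ \prod_{j=1}^{w} x_{\varphi(j)}^{p^{s_j}},
\end{align*}
the last sum ranging over all maps $\varphi$ from $[w]$ to $I$. Substituting this into the definition of $[x^d]^m$ and grouping terms by the underlying map $\varphi \colon [w] \to [m]$, I get
\begin{align*}
[x^d]^m(x_1, \dots, x_m) = \sum_{\varphi \colon [w] \to [m]} c_\varphi \prod_{j=1}^{w} x_{\varphi(j)}^{p^{s_j}}, \qquad c_\varphi = \sum_{\substack{I \subseteq [m] \\ \varphi([w]) \subseteq I}} (-1)^{m - |I|}.
\end{align*}

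Next I would compute $c_\varphi$: setting $k = |\varphi([w])|$ and writing $I = \varphi([w]) \cup J$ with $J \subseteq [m]\setminus\varphi([w])$ turns $c_\varphi$ into a signed binomial sum over $J$, which equals $1$ if $k = m$ and $0$ otherwise. Thus only the surjective maps $\varphi \colon [w] \to [m]$ contribute, each with coefficient $1$. If $m \geq w+1$ there are no such maps, so $[x^d]^m = 0$. If $m = w$, the surjections $[w] \to [w]$ are precisely the permutations, and reindexing the product $\prod_{j} x_{\varphi(j)}^{p^{s_j}}$ by $i = \varphi(j)$ --- so that $j = \sigma(i)$ with $\sigma \coloneqq \varphi^{-1} \in \mathfrak{S}_m$ --- converts it into $\prod_{i} x_i^{p^{s_{\sigma(i)}}}$, which is exactly the asserted symmetric sum. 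For the final clause I would just invoke $F$-linearity of $[\,\cdot\,]^m$ in the function: if $m > \max\Set{w_p(d) | c_d \neq 0}$ then each monomial $c_d x^d$ of $f$ has $m \geq w_p(d)+1$, hence $[x^d]^m = 0$ by the case just proved, and therefore $[f]^m = 0$.

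I do not expect a genuine obstacle here; the argument is a self-contained combinatorial identity. The steps demanding care are the bookkeeping ones: tracking the sign $(-1)^{m-|I|}$ correctly through the reparametrization $I = \varphi([w]) \cup J$, and making sure the relabeling in the case $m = w$ yields the exponent pattern $p^{s_{\sigma(i)}}$ rather than $p^{s_{\sigma^{-1}(i)}}$, so that summing over all $\varphi$ genuinely matches summing over all $\sigma \in \mathfrak{S}_m$.
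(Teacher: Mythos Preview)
Your proof is correct and is essentially identical to the paper's own argument: the paper also expands $\left(\sum_{i\in I}x_i\right)^d$ via Frobenius, indexes the resulting monomials by tuples $(t_1,\dots,t_w)\in I^w$ (your maps $\varphi$), swaps the order of summation, and evaluates the inner alternating sum over $I\supseteq\{t_1,\dots,t_w\}$ by the same binomial identity. The only cosmetic difference is that the paper writes tuples rather than functions and leaves the final reindexing and the linearity step for $[f]^m$ implicit, whereas you spell both out.
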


\begin{proof}
For any subset $I \subset [m]$, we have 
\begin{align*}
\left( \sum_{i \in I} x_i \right)^d = \left( \sum_{i \in I} x_i \right)^{p^{s_1} + \cdots + p^{s_w}}
= \prod_{j=1}^{w} \left( \sum_{i \in I} x_i ^{p^{s_j}} \right) 
= \sum_{t_1, \dots, t_w \in I} x_{t_1}^{p^{s_1}} \cdots x_{t_w}^{p^{s_w}}. 
\end{align*}
Hence we obtain 
\begin{align*}
\left[ x^d \right]^m &= \sum_{I \subset [m]}(-1)^{m-|I|} \left(\sum_{i \in I}x_{i}\right) ^d 
= \sum_{I \subset [m]}(-1)^{m-|I|} 
\left( \sum_{t_1, \dots, t_w \in I} x_{t_1}^{p^{s_1}} \cdots x_{t_w}^{p^{s_w}} \right) \\
&= \sum_{t_1, \dots, t_w \in [m]} 
\left( \sum_{\{ t_1, \dots, t_w \} \subset I \subset [m]} (-1)^{m - |I|} \right) 
x_{t_1}^{p^{s_1}} \cdots x_{t_w}^{p^{s_w}}. 
\end{align*}
Let $T \coloneqq \# \left\{ t_1 , \dots , t_w \right\} \leq w \leq m$. 
Then we have 
\begin{align*}
\# 
\Set{ I \subset [m] | \{ t_1, \dots, t_w \} \subset I \ \ \mbox{and} \ \ |I| = T + j } 
= \binom{m-T}{j} \ \ \ (0 \leq j \leq m - T), 
\end{align*}
where $\binom{m-T}{j}$ denotes the binomial coefficients. 
Thus we obtain 
\begin{align*}
\sum_{\{ t_1, \dots, t_w \} \subset I \subset [m]} (-1)^{m - |I|} 
= \sum_{j = 0}^{m -T} (-1)^{(m-T)-j} \binom{m-T}{j}
= \left\{ 
\begin{array}{cc}
0 & (T < m), \\
1 & (T = m). \\
\end{array}
\right.
\end{align*}
Therefore we obtain (\ref{want lem:order_algdeg_mono}). 
\end{proof}

\begin{definition}
Let $ f \colon F \rightarrow F $ be a non-zero function. 
The maximum integer $ m $ such that $ [f]^{m} \neq 0 $ is called the \textbf{algebraic degree} of $ f $, denoted by $ d^{\circ}(f) $.  
A function of algebraic degree $ 2 $ is called \textbf{quadratic}. 
\end{definition}

\begin{proposition}\label{prop:ch_algdeg}
Let $ f \colon F \rightarrow F $ be a function and let $m$ be a positive integer. 
\begin{enumerate}[label=(\arabic*)]
\item $ d^{\circ}(f)=0 $ if and only if $ f $ is a non-zero constant function. 
\item $ d^{\circ}(f) = m $ if and only if $ [f]^{m} $ is a non-zero 
$ \mathbb{F}_{p} $-multilinear form. 
\item Let $\displaystyle 
f(x) = \sum_{d = 0}^{p^n-1} c_d x^d \in F [x]$. Then 
$d^{\circ} (f) \leq \max \Set{ w_p (d) | c_d \ne 0 }$. 
\end{enumerate}

\end{proposition}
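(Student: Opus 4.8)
The plan is to obtain all three items as bookkeeping consequences of Proposition~\ref{prop:mult_rec}, Lemma~\ref{lem:order_algdeg_mono}, and two elementary observations. \emph{First observation:} the assignment $f \mapsto [f]^m$ is $F$-linear, directly from the defining formula. \emph{Second observation:} $[f]^m$ is a symmetric function of $x_1,\dots,x_m$, again immediately from the definition, since permuting the arguments only relabels the sum over subsets $I\subset[m]$. I will also use that an additive self-map of $F$ is automatically $\mathbb{F}_p$-linear (because $\mathrm{char}\,F=p$), so an $m$-ary map on $F$ that is additive in each argument separately is exactly an $\mathbb{F}_p$-multilinear form. Finally, Proposition~\ref{prop:mult_rec} gives the \emph{propagation principle}: if $[f]^k\equiv0$ for some $k\ge1$, then $[f]^{k+1}\equiv0$, hence $[f]^{k'}\equiv0$ for all $k'\ge k$; consequently, for $m\ge1$ one has $d^{\circ}(f)=m$ if and only if $[f]^m\neq0$ and $[f]^{m+1}\equiv0$.

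With these in hand, \textbf{(1)} follows because $[f]^0=f(0)$ and $[f]^1(x)=f(x)-f(0)$: if $d^{\circ}(f)=0$ then $f(0)\neq0$ and $f\equiv f(0)$, a non-zero constant; conversely a non-zero constant has $[f]^0\neq0$ and $[f]^1\equiv0$, whence $d^{\circ}(f)=0$ by the propagation principle. For \textbf{(2)}, I combine the propagation principle with the instance of Proposition~\ref{prop:mult_rec} reading $[f]^{m+1}(x,y,z_1,\dots,z_{m-1})=[f]^m(x+y,z_1,\dots,z_{m-1})-[f]^m(x,z_1,\dots,z_{m-1})-[f]^m(y,z_1,\dots,z_{m-1})$: the identity $[f]^{m+1}\equiv0$ is equivalent to $[f]^m$ being additive in its first argument, and by the symmetry observation this is equivalent to $[f]^m$ being additive in every argument, i.e. $\mathbb{F}_p$-multilinear. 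Hence $d^{\circ}(f)=m$ $\iff$ ($[f]^m\neq0$ and $[f]^{m+1}\equiv0$) $\iff$ $[f]^m$ is a non-zero $\mathbb{F}_p$-multilinear form.

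For \textbf{(3)}, write $f(x)=\sum_{d}c_dx^d$; by $F$-linearity of $f\mapsto[f]^m$ we get $[f]^m=\sum_{d}c_d\,[x^d]^m$, and the last assertion of Lemma~\ref{lem:order_algdeg_mono} makes this vanish as soon as $m>\max\Set{w_p(d) | c_d\neq0}$, so $d^{\circ}(f)\le\max\Set{w_p(d) | c_d\neq0}$. I do not expect a real obstacle here: the statement is essentially a repackaging of the two results already proved. The only places that require a moment's attention are recording the propagation principle once (so that the algebraic degree is witnessed by the single transition $[f]^m\neq0$, $[f]^{m+1}\equiv0$ rather than by the vanishing of infinitely many terms) and the identification of ``additive in each variable'' with ``$\mathbb{F}_p$-multilinear'', which is where the characteristic $p$ enters.
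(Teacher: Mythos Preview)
Your proposal is correct and is exactly a fleshed-out version of the paper's one-line proof, which simply reads ``Clear from Proposition~\ref{prop:mult_rec} and Lemma~\ref{lem:order_algdeg_mono}.'' The auxiliary observations you isolate (symmetry of $[f]^m$, $F$-linearity of $f\mapsto[f]^m$, and the propagation principle that $[f]^k\equiv0$ forces $[f]^{k'}\equiv0$ for all $k'\ge k$) are precisely the bookkeeping the authors leave to the reader.
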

\begin{proof}
Clear from Proposition \ref{prop:mult_rec} and Lemma \ref{lem:order_algdeg_mono}. 
\end{proof}

EA-equivalent preserves algebraic degrees of functions, that is, we have 

\begin{proposition}
Let $f$, $g \colon F \rightarrow F$ be EA-equivalent functions, and let 
$d^{\circ} (f) \geq 2$. 
Then $d^{\circ} (g) = d^{\circ} (f)$. 
\end{proposition}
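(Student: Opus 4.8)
The plan is to use the multilinearity characterization from Proposition~\ref{prop:ch_algdeg}(2) together with the composition formula for $[f]^m$ under EA-equivalence. Write $g = A_1 \circ f \circ A_2 + A_0$ with $A_i = \alpha_i + c_i$, where $\alpha_1, \alpha_2$ are bijective linear maps, $\alpha_0$ is linear, and $c_i \in F$. Set $m \coloneqq d^{\circ}(f) \geq 2$. The first step is to compute $[g]^m$ directly from the definition, pushing the sum $\sum_{I \subset [m]} (-1)^{m-|I|}$ through each summand of $g$.

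First I would handle the three pieces separately. For the affine part $A_0$: since $[A_0]^m$ is obtained by applying $[\,\cdot\,]^m$ to an affine function and $m \geq 2 > 1 = d^{\circ}(A_0)$ (or $A_0$ constant), Lemma~\ref{lem:order_algdeg_mono} / Proposition~\ref{prop:ch_algdeg} gives $[A_0]^m = 0$, so $A_0$ contributes nothing. For the outer map $A_1 \circ f \circ A_2$: because $\alpha_1$ is linear and additive, it commutes with the alternating sum, so
\begin{align*}
[A_1 \circ f \circ A_2]^m(x_1, \dots, x_m)
&= \sum_{I \subset [m]} (-1)^{m-|I|}\left(\alpha_1\!\left(f\!\left(\alpha_2\!\left(\sum_{i \in I} x_i\right) + c_2\right)\right) + c_1\right) \\
&= \alpha_1\!\left(\sum_{I \subset [m]} (-1)^{m-|I|}\, f\!\left(\sum_{i \in I} \alpha_2(x_i) + c_2\right)\right),
\end{align*}
where the constant $c_1$ drops out since $\sum_{I}(-1)^{m-|I|} = 0$ for $m \geq 1$. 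The remaining inner sum is exactly $[f_{c_2}]^m(\alpha_2(x_1), \dots, \alpha_2(x_m))$, where $f_{c_2}(x) \coloneqq f(x + c_2)$ is a translate of $f$. Thus $[g]^m(x_1,\dots,x_m) = \alpha_1\big([f_{c_2}]^m(\alpha_2(x_1),\dots,\alpha_2(x_m))\big)$.

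The key remaining point is that translation does not change $[f]^m$ when $[f]^m$ is multilinear. Since $m = d^{\circ}(f)$, by Proposition~\ref{prop:ch_algdeg}(2) the map $[f]^m$ is a nonzero $\mathbb{F}_p$-multilinear form, and in particular $[f]^{m+1} = 0$. Using the recursion of Proposition~\ref{prop:mult_rec} in the form $[f]^{m}(x+y, z_1, \dots, z_{m-1}) = [f]^m(x, z_1,\dots) + [f]^m(y, z_1, \dots)$ (additivity in the first slot, and by symmetry in every slot), one shows $[f_{c_2}]^m = [f]^m$: expanding $[f_{c_2}]^m(x_1,\dots,x_m) = [f]^m(x_1 + c_2 \cdot \tfrac1m?\dots)$ is not quite the right move—instead, observe directly from the definition that $[f_{c_2}]^m(x_1, \dots, x_m) = \sum_{I}(-1)^{m-|I|} f\big(c_2 + \sum_{i \in I} x_i\big)$, and compare with $[f]^m$ evaluated after absorbing $c_2$; more cleanly, multilinearity of $[f]^m$ gives $[f]^{m+1} = 0$, and $[f_{c_2}]^m(x_1,\dots,x_m) - [f]^m(x_1,\dots,x_m)$ can be written as a signed sum of values of $[f]^{m+1}$ (adjoining $c_2$ as an extra argument via Proposition~\ref{prop:mult_rec}), hence vanishes. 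Therefore $[g]^m = \alpha_1 \circ [f]^m \circ (\alpha_2 \times \cdots \times \alpha_2)$, which is nonzero because $\alpha_1, \alpha_2$ are bijective and $[f]^m \neq 0$; moreover $[g]^{m+1}$ is obtained the same way from $[f]^{m+1} = 0$, so it vanishes. By definition this means $d^{\circ}(g) = m = d^{\circ}(f)$.

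The main obstacle is the translation-invariance step: making rigorous that replacing $f$ by $f_{c_2}(x) = f(x+c_2)$ leaves $[f]^m$ unchanged precisely when $m \geq d^{\circ}(f)$. The hypothesis $d^{\circ}(f) \geq 2$ is what rules out the genuinely affine case, where a translate changes the constant term of $[f]^1$; for $m = d^{\circ}(f) \geq 2$ the difference is governed by $[f]^{m+1} = 0$ via the recursion, so it disappears. By symmetry (swapping the roles of $f$ and $g$, using $A_1^{-1}, A_2^{-1}$) one also gets $d^{\circ}(f) \geq d^{\circ}(g)$ when $d^{\circ}(g) \geq 2$, but the computation above already yields equality directly.
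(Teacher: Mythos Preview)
Your proposal is correct and follows essentially the same route as the paper. The paper collapses your two steps (passing to the translate $f_{c_2}$, then showing $[f_{c_2}]^m=[f]^m$ via $[f]^{m+1}=0$) into the single identity
\[
[g]^m(x_1,\dots,x_m)=\alpha_1\bigl([f]^{m+1}(\alpha_2(x_1),\dots,\alpha_2(x_m),c_2)+[f]^m(\alpha_2(x_1),\dots,\alpha_2(x_m))\bigr),
\]
valid for all $m\ge 2$; your translation-invariance step is exactly this identity in disguise, since $[f_{c_2}]^m-[f]^m=[f]^{m+1}(\,\cdot\,,\dots,\,\cdot\,,c_2)$.
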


\begin{proof}
By definition, we have 
$g = A_1 \circ f \circ A_2 + A_0$ for some affine functions 
$A_0$, $A_1$ and $A_2$, where $A_1$ and $A_2$ are permutations. 
For each $i \in \Set{ 0, 1, 2}$, we may put $A_i = \alpha_i + c_i$, 
where $\alpha_i$ is a linear function on $F$ and $c_i \in F$. 
Then $\alpha_1$ and $\alpha_2$ are bijective. 
For any integer $m \geq 2$, 
we have 
\begin{align*}
[A_0]^m (x_1, \dots , x_m) 
= \alpha_0 \left( \sum_{I \subset [m]} (-1) ^{m-|I|} \sum_{i \in I} x_i \right) 
+ c_0 \sum_{I \subset [m]} (-1) ^{m-|I|} = 0. 
\end{align*}
Hence we obtain 
\begin{align*}
[g]^m (x_1, \dots, x_m) &= [A_1 \circ f \circ A_2 + A_0]^m (x_1, \dots, x_m) 
= [A_1 \circ f \circ A_2]^m (x_1, \dots, x_m) \\
&= \alpha_1 \left(  
[f]^{m+1} (\alpha_2 (x_1), \dots, \alpha_2 (x_m), c_2) + [f]^m (\alpha_2 (x_1), \dots, \alpha_2 (x_m))
\right). 
\end{align*}
Therefore we have $d^{\circ} (g) = d^{\circ} (f)$. 
\end{proof}

For a function $ f \colon F \rightarrow F $ we define 
$\tilde{B}_{f}(x,y) \coloneqq [f]^{p}(x,y, \dots, y) $. 
Note that 
if $ d^{\circ}(f) \leq p $ then $ \tilde{B}_{f}(x,y) $ is linear in $ x $ by Proposition \ref{prop:ch_algdeg} 
and when $ p=2 $ a function $ f $ is quadratic if and only if 
$ \tilde{B}_{f}(x,y)=f(x+y)+f(x)+f(y)+f(0) $ is a non-zero bilinear form. 

\begin{proposition}\label{prop:Bf}
$\tilde{B}_{f}(x,a) = \tilde{D}_{a}f(x)-\tilde{D}_{a}f(0)$ 
for any $x$, $a \in F$. 
\end{proposition}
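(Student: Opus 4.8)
The plan is to expand both sides directly from the definitions and match terms. Recall that $\tilde{D}_a f(x) = \sum_{i \in \mathbb{F}_p} f(x + ia)$, so that $\tilde{D}_a f(x) - \tilde{D}_a f(0) = \sum_{i \in \mathbb{F}_p} \bigl( f(x+ia) - f(ia) \bigr)$. On the other side, $\tilde{B}_f(x,a) = [f]^p(x, a, \dots, a)$ with $p-1$ copies of $a$, which by definition equals $\sum_{I \subset [p]} (-1)^{p-|I|} f\bigl( \sum_{i \in I} x_i \bigr)$ evaluated at $x_1 = x$, $x_2 = \dots = x_p = a$. I would split the index sets $I \subset [p]$ according to whether $1 \in I$ or not, and according to $|I|$; a subset $I$ with $1 \notin I$ and $|I| = k$ contributes $(-1)^{p-k} f(ka)$ and there are $\binom{p-1}{k}$ such sets, while a subset with $1 \in I$ and $|I| = k+1$ contributes $(-1)^{p-k-1} f(x + ka)$ with $\binom{p-1}{k}$ such sets.

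Thus the plan reduces to showing
\begin{align*}
\tilde{B}_f(x,a) = \sum_{k=0}^{p-1} (-1)^{p-k-1} \binom{p-1}{k} \bigl( f(x+ka) - f(ka) \bigr),
\end{align*}
and comparing this with $\sum_{i \in \mathbb{F}_p} \bigl( f(x+ia) - f(ia) \bigr)$. The key arithmetic fact is the standard congruence $\binom{p-1}{k} \equiv (-1)^k \pmod{p}$, which holds since $\binom{p-1}{k} = \prod_{j=1}^{k} \frac{p-j}{j} \equiv \prod_{j=1}^{k} \frac{-j}{j} = (-1)^k$ in $\mathbb{F}_p$. Substituting, the coefficient $(-1)^{p-k-1}\binom{p-1}{k}$ becomes $(-1)^{p-k-1}(-1)^k = (-1)^{p-1} = 1$ in $F$ (here $p-1$ is even when $p$ is odd, and for $p = 2$ one checks the two terms directly, or notes $(-1)^{p-1} = 1$ regardless of parity since $-1 = 1$). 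Hence every term $f(x+ka) - f(ka)$ appears with coefficient $1$, and as $k$ runs over $0, 1, \dots, p-1$ this is exactly the set $\mathbb{F}_p$, giving the claimed identity.

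The only subtlety — and the step I would be most careful about — is the reduction modulo $p$: the binomial coefficients $\binom{p-1}{k}$ are genuine integers and a priori the identity $[f]^p(x,a,\dots,a) = \sum_I (-1)^{p-|I|} f(\cdots)$ is an identity of formal sums, but since $F$ has characteristic $p$, the integer coefficients collapse via the congruence above, and this is where the hypothesis that $F$ has characteristic $p$ is essential. No assumption on $d^{\circ}(f)$ is needed. I would present the two displayed expansions, invoke the congruence, and conclude.
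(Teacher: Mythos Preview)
Your proof is correct and follows essentially the same route as the paper: expand $[f]^p(x,a,\dots,a)$ by grouping subsets $I\subset[p]$ according to whether $1\in I$ to obtain $\sum_{k=0}^{p-1}(-1)^{p-1-k}\binom{p-1}{k}\bigl(f(x+ka)-f(ka)\bigr)$, then apply the congruence $\binom{p-1}{k}\equiv(-1)^k\pmod p$. The paper's proof is just a terser statement of exactly this computation.
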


\begin{proof}
Since 
$\displaystyle 
\tilde{B}_{f}(x,a) = [f]^{p}(x, a, \dots, a)
=\sum_{i=0}^{p-1}(-1)^{p-1-i}\binom{p-1}{i}\left(f(x+ia)-f(ia)\right)
$, the result holds true by the congruence 
$\displaystyle 
\binom{p-1}{i} \equiv (-1)^{i}\pmod{p}$. 
\end{proof}

\begin{proposition} \label{prop:decomp}
Suppose that $ d^{\circ}(f) \leq p $.
Then 
\begin{align*}
\tilde{D}_a f (x \pm y) 
= \tilde{D}_a f (x) \pm \tilde{D}_a f (y) 
\mp \tilde{D}_a f (0) . 
\end{align*}
In particular, if $\tilde{D}_a f (0) = 0$, then the mapping $\tilde{D}_a f$ is linear over ${\mathbb F}_{p}$. 
\end{proposition}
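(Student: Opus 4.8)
The plan is to reduce everything to the $\mathbb{F}_{p}$-linearity of the map $x \mapsto \tilde{B}_{f}(x,a)$ that is recorded in the paragraph preceding Proposition \ref{prop:Bf}. First I would observe that under the hypothesis $d^{\circ}(f) \leq p$ the form $[f]^{p}$ is either identically zero (when $d^{\circ}(f) < p$) or a non-zero $\mathbb{F}_{p}$-multilinear form (when $d^{\circ}(f) = p$), by Proposition \ref{prop:ch_algdeg}(2). In both cases $\tilde{B}_{f}(x,a) = [f]^{p}(x,a,\dots,a)$ is $\mathbb{F}_{p}$-linear in the first variable $x$ for each fixed $a$, and in particular $\tilde{B}_{f}(x \pm y, a) = \tilde{B}_{f}(x,a) \pm \tilde{B}_{f}(y,a)$.

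Next I would invoke Proposition \ref{prop:Bf} in the form $\tilde{D}_{a}f(x) = \tilde{B}_{f}(x,a) + \tilde{D}_{a}f(0)$, valid for every $x \in F$, and substitute $x \pm y$:
\begin{align*}
\tilde{D}_{a}f(x \pm y)
&= \tilde{B}_{f}(x \pm y, a) + \tilde{D}_{a}f(0)
= \tilde{B}_{f}(x,a) \pm \tilde{B}_{f}(y,a) + \tilde{D}_{a}f(0) \\
&= \left( \tilde{D}_{a}f(x) - \tilde{D}_{a}f(0) \right) \pm \left( \tilde{D}_{a}f(y) - \tilde{D}_{a}f(0) \right) + \tilde{D}_{a}f(0) \\
&= \tilde{D}_{a}f(x) \pm \tilde{D}_{a}f(y) \mp \tilde{D}_{a}f(0),
\end{align*}
which is the claimed identity. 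For the final assertion, when $\tilde{D}_{a}f(0) = 0$ Proposition \ref{prop:Bf} gives $\tilde{D}_{a}f = \tilde{B}_{f}(\cdot,a)$, which is $\mathbb{F}_{p}$-linear by the first paragraph; alternatively, additivity of $\tilde{D}_{a}f$ from the displayed identity already forces $\mathbb{F}_{p}$-homogeneity in characteristic $p$.

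I do not expect a genuine obstacle here: the argument is a short substitution once the earlier results are in hand. The only points needing a little care are bookkeeping the signs in the $x \pm y$ substitution correctly (so that the $\mp \tilde{D}_{a}f(0)$ term comes out with the right sign), and making sure the $\mathbb{F}_{p}$-linearity of $\tilde{B}_{f}(\cdot,a)$ is justified uniformly across the degenerate case $d^{\circ}(f) < p$, where $\tilde{B}_{f} \equiv 0$, and the generic case $d^{\circ}(f) = p$.
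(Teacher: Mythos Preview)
Your proof is correct and follows essentially the same route as the paper: invoke Proposition~\ref{prop:Bf} to rewrite $\tilde{D}_{a}f$ in terms of $\tilde{B}_{f}(\cdot,a)$, use the $\mathbb{F}_{p}$-linearity of $\tilde{B}_{f}$ in its first variable (available since $d^{\circ}(f)\leq p$), and unwind. Your write-up is slightly more explicit about why that linearity holds in the degenerate case $d^{\circ}(f)<p$, but the argument is otherwise identical to the paper's.
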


\begin{proof}
By Proposition \ref{prop:Bf}, we have 
\begin{align*}
\tilde{D}_{a}f(x \pm y) 
&= \tilde{B}_{f}(x \pm y,a)+\tilde{D}_{a}f(0) 
= \tilde{B}_{f}(x,a) \pm \tilde{B}_{f}(y,a) + \tilde{D}_{a}f(0) \\
&= \left(\tilde{D}_{a}f(x) - \tilde{D}_{a}f(0)\right)
\pm \left(\tilde{D}_{a}f(y) - \tilde{D}_{a}f(0)\right)
+ \tilde{D}_{a}f(0) \\
&= \tilde{D}_a f (x) \pm \tilde{D}_a f (y) 
\mp \tilde{D}_a f (0). 
\end{align*}
\end{proof}

We have two characterizations as follows 
for GAPN functions of algebraic degree at most $ p $. 
These are generalizations of classical results for quadratic APN functions. 

\begin{proposition} \label{lem:alg_criterion1}
Suppose that $ d^{\circ}(f) \leq p $. 
Then $ \tilde{N}_{f}(a,b) $ equals zero or $ \tilde{N}_{f}(a,\tilde{D}_{a}f(0)) $ for any $ a \in F^{\times} $ and $ b \in F $. 
In particular, $f$ is a GAPN function if and only if $ \tilde{N}_{f}(a,\tilde{D}_{a}f(0)) \leq p $ for any $ a \in F^{\times} $. 
\end{proposition}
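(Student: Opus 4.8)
The plan is to exploit the linearity of $\tilde{B}_{f}(\cdot,a)$ that comes from the hypothesis $d^{\circ}(f)\leq p$. Fix $a\in F^{\times}$. By Proposition \ref{prop:Bf} we have $\tilde{D}_{a}f(x)=\tilde{B}_{f}(x,a)+\tilde{D}_{a}f(0)$, and since $d^{\circ}(f)\leq p$, Proposition \ref{prop:ch_algdeg} shows that the map $L_{a}\colon F\to F$ defined by $L_{a}(x)\coloneqq\tilde{B}_{f}(x,a)$ is $\mathbb{F}_{p}$-linear. Hence for every $b\in F$,
\begin{align*}
\tilde{N}_{f}(a,b)=\#\Set{x\in F | L_{a}(x)=b-\tilde{D}_{a}f(0)},
\end{align*}
which is the cardinality of a fibre of the $\mathbb{F}_{p}$-linear map $L_{a}$. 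Such a fibre is either empty or a coset of $\ker L_{a}$, so it has cardinality $0$ or $\#\ker L_{a}$.

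Next I would identify $\#\ker L_{a}$ with $\tilde{N}_{f}(a,\tilde{D}_{a}f(0))$. Taking $b=\tilde{D}_{a}f(0)$ in the displayed formula gives $\tilde{N}_{f}(a,\tilde{D}_{a}f(0))=\#\Set{x\in F | L_{a}(x)=0}=\#\ker L_{a}$, and this fibre is non-empty since $0\in\ker L_{a}$. Combining this with the previous paragraph, $\tilde{N}_{f}(a,b)$ equals $0$ or $\tilde{N}_{f}(a,\tilde{D}_{a}f(0))$ for every $b\in F$, which proves the first assertion.

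For the ``in particular'' part, recall from the introduction that $\tilde{N}_{f}(a,b)$ is always divisible by $p$, so a non-zero value of $\tilde{N}_{f}$ is at least $p$; in particular $\tilde{N}_{f}(a,\tilde{D}_{a}f(0))\geq p$, this value being non-zero. Therefore $f$ is a GAPN function if and only if $\tilde{N}_{f}(a,b)\in\{0,p\}$ for all $a\in F^{\times}$ and $b\in F$, and by the first assertion this is equivalent to $\tilde{N}_{f}(a,\tilde{D}_{a}f(0))=p$ for all $a\in F^{\times}$, i.e.\ to $\tilde{N}_{f}(a,\tilde{D}_{a}f(0))\leq p$ for all $a\in F^{\times}$.

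I do not expect a genuine obstacle here: the whole argument rests on the already-established linearity of $\tilde{B}_{f}(\cdot,a)$ together with the elementary fact that the fibres of a linear map are cosets of its kernel. The only point requiring a little care is bookkeeping of the shift by $\tilde{D}_{a}f(0)$ and the invocation of the divisibility of $\tilde{N}_{f}$ by $p$ to upgrade the inequality $\leq p$ to the equality $=p$.
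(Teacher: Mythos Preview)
Your proof is correct and follows essentially the same approach as the paper. The only cosmetic difference is that the paper routes the argument through Proposition~\ref{prop:decomp} (translating by a solution $x_0$ to reduce $\tilde{D}_a f(x)=b$ to $\tilde{D}_a f(x-x_0)=\tilde{D}_a f(0)$), whereas you invoke Proposition~\ref{prop:Bf} directly and phrase the same fact as ``fibres of the $\mathbb{F}_p$-linear map $L_a$ are cosets of $\ker L_a$''; since Proposition~\ref{prop:decomp} is itself an immediate consequence of the linearity of $\tilde{B}_f(\cdot,a)$, the two arguments are the same in substance.
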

\begin{proof}
If $\tilde{D}_a f (x) = b$ has no solutions in $F$, 
then $\tilde{N}_f (a, b) = 0$. Assume that $x_0 \in F$ is a solution of 
$\tilde{D}_a f (x) = b$. 
By Proposition \ref{prop:decomp}
\begin{align*}
\tilde{D}_{a}f(x) - b 
= \tilde{D}_{a}f(x) - \tilde{D}_{a}f(x_{0}) 
= \tilde{D}_{a}f(x-x_{0})-\tilde{D}_{a}f(0). 
\end{align*}
Hence $\tilde{D}_{a}f(x)=b $ if and only if $\tilde{D}_{a}f(x-x_{0}) = \tilde{D}_{a}f(0)$, 
and hence we have that $ \tilde{N}_{f}(a,b) = \tilde{N}_{f}(a,\tilde{D}_{a}(0)) $. 
\end{proof}

Since $\tilde{B}_f (x, a) =0$, that is $\tilde{D}_{a}f(x) = \tilde{D}_{a}f(0)$ has 
trivial solutions $ x \in \mathbb{F}_{p}a $, and $[f]^p = 0$ implies that 
$\tilde{B}_f (x, a) = [f]^p (x, a, \dots, a) = 0$ for any $x$, $a \in F$, 
we obtain 

\begin{proposition}\label{lem:alg_criterion2}
\begin{enumerate}[label=(\arabic*)]
\item Suppose that $ d^{\circ}(f) \leq p $. 
Then $ f $ is a GAPN function if and only if 
$ \Set{ x \in F | \tilde{B}_f (x, a) =0 } = {\mathbb F}_p a $ for any $ a \in F^{\times} $. 
\item 
If $f$ is a GAPN function with $d^{\circ} (f) \leq p$, then $d^{\circ} (f) = p$. 
In particular, 
GAPN functions are algebraic degree at least $p$. 
\end{enumerate}
\end{proposition}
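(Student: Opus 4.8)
The plan is to extract everything from Proposition~\ref{prop:Bf}, Proposition~\ref{lem:alg_criterion1}, and the defining identity $\tilde{B}_{f}(x,y)=[f]^{p}(x,y,\dots,y)$; essentially no new computation is needed, only careful bookkeeping with the ``trivial solutions'' $\mathbb{F}_{p}a$.

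For part~(1), first I would record that for every $j\in\mathbb{F}_{p}$ and $a\in F$,
\begin{align*}
\tilde{D}_{a}f(ja)=\sum_{i\in\mathbb{F}_{p}}f\bigl((i+j)a\bigr)=\sum_{k\in\mathbb{F}_{p}}f(ka)=\tilde{D}_{a}f(0),
\end{align*}
so, by Proposition~\ref{prop:Bf}, the set $S_{a}:=\{x\in F\mid\tilde{B}_{f}(x,a)=0\}$ coincides with $\{x\in F\mid\tilde{D}_{a}f(x)=\tilde{D}_{a}f(0)\}$ and contains the $p$-element subset $\mathbb{F}_{p}a$ (note $a\neq0$). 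Hence $|S_{a}|=\tilde{N}_{f}(a,\tilde{D}_{a}f(0))\geq p$ for all $a\in F^{\times}$. Now I would invoke Proposition~\ref{lem:alg_criterion1}: since $d^{\circ}(f)\leq p$, $f$ is GAPN if and only if $\tilde{N}_{f}(a,\tilde{D}_{a}f(0))\leq p$ for all $a\in F^{\times}$. Combining the two bounds, $f$ is GAPN if and only if $|S_{a}|=p$ for every $a\in F^{\times}$, and since $\mathbb{F}_{p}a\subseteq S_{a}$ with both sides of size $p$, this is exactly $S_{a}=\mathbb{F}_{p}a$.

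For part~(2), I would argue by contraposition: suppose $d^{\circ}(f)\leq p-1$ (this also subsumes the constant case $d^{\circ}(f)=0$). By the definition of algebraic degree $[f]^{p}=0$, hence $\tilde{B}_{f}(x,a)=[f]^{p}(x,a,\dots,a)=0$ for all $x,a\in F$, and then Proposition~\ref{prop:Bf} gives $\tilde{D}_{a}f(x)=\tilde{D}_{a}f(0)$ for all $x$, so $\tilde{N}_{f}(a,\tilde{D}_{a}f(0))=p^{n}$ for every $a\in F^{\times}$. As soon as $p^{n}>p$ this contradicts the GAPN property, proving that a GAPN function has algebraic degree at least $p$; together with $d^{\circ}(f)\leq p$ this yields $d^{\circ}(f)=p$. (Alternatively, staying inside the framework of part~(1): $[f]^{p}=0$ forces $S_{a}=F\neq\mathbb{F}_{p}a$.)

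I expect no real obstacle, since the statement is a repackaging of the earlier criteria. The one point that needs care is the degenerate case $n=1$, where $\mathbb{F}_{p}a=F$ for every $a\in F^{\times}$, every map on $\mathbb{F}_{p}$ is vacuously GAPN, and part~(2) fails (take $f(x)=x$); so one should read the statement under the standing assumption $n\geq2$, equivalently $|F|>p$, under which the argument above works verbatim.
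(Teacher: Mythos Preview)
Your proposal is correct and follows essentially the same approach as the paper: the paper's proof is the one-sentence remark preceding the proposition, observing that $\tilde{B}_f(x,a)=0$ is the equation $\tilde{D}_a f(x)=\tilde{D}_a f(0)$ with trivial solutions $\mathbb{F}_p a$ (which, combined with Proposition~\ref{lem:alg_criterion1}, gives (1)), and that $[f]^p=0$ forces $\tilde{B}_f\equiv 0$ (which gives (2)). Your note about the degenerate case $n=1$ is a valid caveat that the paper omits.
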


\subsection{Fourier-Walsh transform}
For a function $f \colon F \rightarrow F$ and an element $ b \in F $, we define 
\begin{align*}
f_{b} \colon F \longrightarrow \mathbb{F}_{p}, \ x \longmapsto \Tr (b f (x)), 
\end{align*}
where $ \Tr $ denotes the absolute trace on $ F $. 
The functions $f_{b}$ are called the \textbf{components} of $f$. 
For any function $f \colon F \rightarrow \mathbb{F}_{p}$, 
let $\mathcal{F}(f)$ denote 
the following value related to the Fourier-Walsh transform of $f$: 
\begin{align*}
\mathcal{F}(f) \coloneqq \sum_{x \in F} \zeta_p ^{f (x)}, 
\end{align*}
where $\zeta_p$ is the primitive $p$-th root of unity. 
We have the following characterization for GAPN functions, which is a generalization of 
APN's one 
introduced in \cite{Nyberg1995s-boxes}.

\begin{proposition} \label{char_of_GAPN}
Let $f \colon F \rightarrow F$ be a function.
Then 
\begin{align*}
\sum_{a \in F, 
b \in F^{\times}} 
|\mathcal{F} (\tilde{D}_a f _b)|^2 \geq p^{2 n + 1} (p^n - 1)
\end{align*}
with equality if and only if $f$ is a GAPN function.  
\end{proposition}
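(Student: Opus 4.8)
The plan is to compute the double sum $\sum_{a \in F, b \in F^\times} |\mathcal{F}(\tilde{D}_a f_b)|^2$ by expanding everything in terms of the additive characters and counting solutions to $\tilde{D}_a f(x) = \tilde{D}_a f(y)$. First I would write $|\mathcal{F}(\tilde{D}_a f_b)|^2 = \sum_{x,y \in F} \zeta_p^{\tilde{D}_a f_b(x) - \tilde{D}_a f_b(y)}$, and use that $\tilde{D}_a f_b = \Tr(b \tilde{D}_a f)$ since $\tilde{D}_a$ is $\mathbb{F}_p$-linear in $f$ and $\Tr$ is $\mathbb{F}_p$-linear; hence the exponent is $\Tr\bigl(b(\tilde{D}_a f(x) - \tilde{D}_a f(y))\bigr)$. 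Summing over $b \in F$ (first including $b=0$, then subtracting the $b=0$ term which contributes $p^n$ for each pair $(a,x,y)$, i.e.\ $p^n \cdot p^n \cdot p^n = p^{3n}$ total) and using the orthogonality relation $\sum_{b \in F} \zeta_p^{\Tr(bc)} = p^n [c = 0]$, the sum over $b \in F^\times$ collapses to $p^n$ times the number of triples $(a,x,y)$ with $\tilde{D}_a f(x) = \tilde{D}_a f(y)$, minus $p^{3n}$.

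Next I would organize the triple count. Write $S := \#\{(a,x,y) \in F^3 : \tilde{D}_a f(x) = \tilde{D}_a f(y)\}$. Splitting off $a = 0$ (where $\tilde{D}_0 f \equiv 0$ in the $p>2$ case, or more generally $\tilde{D}_0 f$ is constant, so the condition always holds, giving $p^{2n}$ triples), the remaining contribution is $\sum_{a \in F^\times} \#\{(x,y) : \tilde{D}_a f(x) = \tilde{D}_a f(y)\} = \sum_{a \in F^\times} \sum_{b \in F} \tilde{N}_f(a,b)^2$. So after combining, $\sum_{a\in F, b\in F^\times} |\mathcal{F}(\tilde{D}_a f_b)|^2 = p^n\bigl(p^{2n} + \sum_{a \in F^\times}\sum_{b \in F}\tilde{N}_f(a,b)^2\bigr) - p^{3n} = p^n \sum_{a \in F^\times}\sum_{b \in F}\tilde{N}_f(a,b)^2$.

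The final step is the inequality and equality characterization. For each fixed $a \in F^\times$, we have $\sum_{b \in F}\tilde{N}_f(a,b) = p^n$, and every $\tilde{N}_f(a,b)$ is a nonnegative multiple of $p$ (as noted right after the definition of GAPN). Among nonnegative integer vectors that are multiples of $p$ and sum to $p^n$, the sum of squares $\sum_b \tilde{N}_f(a,b)^2$ is minimized exactly when each entry is $0$ or $p$; in that case the minimum value is $p \cdot p^n = p^{n+1}$ (since $p^{n-1}$ of the values equal $p$). This is a standard convexity/rearrangement observation: if some $\tilde{N}_f(a,b) = kp$ with $k \geq 2$, replacing it by contributions closer to $p$ strictly decreases the sum of squares while preserving the total. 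Summing over the $p^n - 1$ choices of $a$ gives $\sum_{a \in F^\times}\sum_b \tilde{N}_f(a,b)^2 \geq p^{n+1}(p^n-1)$, hence the stated bound $p^{2n+1}(p^n-1)$, with equality if and only if $\tilde{N}_f(a,b) \in \{0,p\}$ for all $a \in F^\times$, $b \in F$, which is precisely the GAPN condition.

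The main obstacle is getting the character-sum bookkeeping exactly right: carefully handling the $a=0$ and $b=0$ contributions (and the slightly different behavior of $\tilde{D}_0 f$ and of linear/constant terms when $p=2$ versus $p>2$), and making sure the substitution $\tilde{D}_a f_b = \Tr(b\,\tilde{D}_a f)$ is justified from $\mathbb{F}_p$-linearity. The convexity step at the end is routine but should be stated cleanly, perhaps as: for fixed $a$, $\sum_b \tilde{N}_f(a,b)^2 \geq p \sum_b \tilde{N}_f(a,b) = p^{n+1}$ because each $\tilde{N}_f(a,b) \geq p$ whenever it is nonzero, with equality iff no $\tilde{N}_f(a,b)$ exceeds $p$.
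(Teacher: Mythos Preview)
Your argument is correct and is essentially the same as the paper's proof: both establish the Parseval-type identity $\sum_{a\in F,\,b\in F^\times}|\mathcal{F}(\tilde{D}_af_b)|^2=p^n\sum_{a\in F^\times,\,b\in F}\tilde{N}_f(a,b)^2$ and then use $\tilde{N}_f(a,b)^2\ge p\,\tilde{N}_f(a,b)$ together with $\sum_b\tilde{N}_f(a,b)=p^n$. The only cosmetic difference is that the paper packages the character computation as a matrix identity (writing $T=NX$ with $XX^\ast=p^nI$ and taking traces), while you carry out the same orthogonality calculation directly.
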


\begin{proof}
We define $p^n \times p^n$ matrices $ X,T,N $ which are indexed by elements in $ F \times F $. 
The $(a, b)$-components of these matrices are as follows: 
\begin{gather*}
X_{a b} \coloneqq \zeta_p^{\Tr (a b)}, \ 
T_{a b} \coloneqq \mathcal{F} (\tilde{D}_a f_b), \ 
N_{a b} \coloneqq \tilde{N}_f (a, b). 
\end{gather*}
Then we have $ T=NX $ since
\begin{align*}
T_{a b} = \sum_{x \in F} \zeta_p^{\Tr (b \tilde{D}_a f (x))} 
= \sum_{y \in F} \tilde{N}_{f} (a, y) \zeta_p ^{\Tr (y b)} 
= \sum_{y \in F} N_{a y} X_{y b}. 
\end{align*}
Moreover, we have $ XX^{\ast} = p^{n}I $, where $ X^{\ast} $ denotes the adjoint matrix of $ X $ and $ I $ the identity matrix, since 
\begin{align*}
\sum_{c \in F} X_{a c} \overline{X_{c b}} 
= \sum_{c \in F} \zeta_p^{{\rm Tr} \left( (a-b) c \right)}
= \left\{ \begin{array}{cc}
p^n & (a = b), \\
0 & (a \neq b). 
\end{array}
\right. 
\end{align*}
Therefore we have 
\begin{align*}
\sum_{a, b \in F} |T_{a b}|^2 &= \Tr \left( T T^{\ast} \right)  
= \Tr \left( N X X^{\ast} N^{\ast} \right)  
= p^n \Tr \left( N N^{\ast} \right)
= p^n \sum_{a, b \in F} \tilde{N}_f (a, b)^2. 
\end{align*}
On the other hand, 
we have 
$\displaystyle 
\tilde{N}_f (0, b)^2 = \left\{ \begin{array}{cc}
p^{2 n} & (b = 0), \\
0 & (b \not = 0), 
\end{array}
\right.
$
and if $a \not = 0$, then we have $\tilde{N}_f (a, b)^2 \geq p \tilde{N}_f (a, b)$. 
Hence we obtain 
\begin{align*}
\sum_{a, b \in F} |T_{a b}|^2 
&= p^n \left( 
\sum_{b \in F} \tilde{N}_f (0, b)^2 
+ \sum_{a \in F^{\times}, b \in F} \tilde{N}_f (a, b)^2 \right) \\
&= p^{3 n} + p^n \sum_{a \in F^{\times}, b \in F} \tilde{N}_f (a, b)^2 
\geq p^{3n} + p^{n+1} \sum_{a \in F^{\times}, b \in F} \tilde{N}_f (a, b). 
\end{align*}
Moreover, we have 
$\displaystyle 
\sum_{b \in F} \tilde{N}_f (a, b) 
= \sum_{b \in F} \# \left( (\tilde{D}_a f )^{-1} (b) \right) = p^n$, and hence we obtain 
$\displaystyle 
\sum_{a \in F^{\times}, b \in F} \tilde{N}_f (a, b) = (p^n - 1)p^n$. 
We have 
$\displaystyle \sum_{a \in {\mathbb F}_{p^n}} T _{a 0}^2 
=\sum_{a \in {\mathbb F}_{p^n}} \left( \sum_{x \in {\mathbb F}_{p^n}} \zeta_p^{{\rm Tr} (0 \cdot D_a f (x))} \right)^2 = p^{3 n}$ clearly. 
Thus we have 
\begin{align*}
\sum_{a \in F, 
b \in F^{\times}} |\mathcal{F} (\tilde{D}_a f _b)|^2 
= \sum_{a \in F, b \in F^{\times}} |T_{a b}|^2 
\geq p^{2n+1} (p^{n} - 1) 
\end{align*}
with equality if and only if $\tilde{N}_f (a, b)$ equals $0$ or $p$ for all $ a \in F^{\times} $ and $b \in F$, that is, $f$ is a GAPN function. 
\end{proof}

\section{Examples of GAPN functions}\label{sect:examples}

\subsection{Inverse permutations}
The inverse permutation $ f $ on $ F $ is defined by 
\begin{align*}
f(x) \coloneqq x^{p^{n}-2} 
=\begin{cases}
x^{-1} & (x \neq 0), \\
0 & (x=0).
\end{cases}
\end{align*}
The following is well known: 
\begin{proposition}[Beth-Ding \cite{BD1994almost}, 
Nyberg \cite{Nyberg1994differentially}]
Let $ f $ be the inverse permutation on $ \mathbb{F}_{2^{n}} $. 
Then $ f $ is APN if and only if $ n $ is odd. 
\end{proposition}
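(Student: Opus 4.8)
The plan is to analyze the equation $\tilde{D}_a f(x) = b$ for the inverse permutation $f(x) = x^{p^n-2}$ directly, treating the nonzero part $x \mapsto x^{-1}$ and the special values $x \in \mathbb{F}_p a$ (on which $f$ may involve the zero) separately. For $p = 2$ the classical argument (Beth--Ding, Nyberg) reduces $D_a f(x) = b$ to a quadratic equation in $x$ after clearing denominators, and counts solutions via whether a certain trace vanishes; one finds APN $\iff$ $x^2 + x + 1$ has no root in $\mathbb{F}_{2^n}$ $\iff$ $3 \nmid 2^n-1$ $\iff$ $n$ odd. Since the statement here is exactly that classical result (the excerpt ends at the classical proposition, before the GAPN generalization), I would expect the proof to simply recall or reproduce this argument.

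First I would fix $a \in \mathbb{F}_{2^n}^\times$ and $b \in \mathbb{F}_{2^n}$, and write $D_a f(x) = f(x+a) + f(x)$, distinguishing the cases $x = 0$, $x = a$, and $x \notin \{0, a\}$. On $x \notin \{0,a\}$ one has $D_a f(x) = \tfrac{1}{x+a} + \tfrac{1}{x} = \tfrac{a}{x^2 + ax}$, so $D_a f(x) = b$ becomes $b x^2 + ab x + a = 0$ when $b \neq 0$ (and is impossible for $b = 0$ with $x \notin\{0,a\}$), a quadratic with at most two roots. Second, I would handle $b = 0$ and the degenerate inputs $x \in \{0, a\}$: $D_a f(0) = D_a f(a) = 1/a \neq 0$, so for $b = 1/a$ these contribute, and one checks the total never exceeds $2$ except possibly when the quadratic $bx^2 + abx + a = 0$ also has two roots simultaneously with $x \in \{0,a\}$ being solutions — a case one rules out or absorbs. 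Third, the crux: for which $n$ does there exist $(a,b)$ with three or more solutions? This forces the quadratic $bx^2 + abx + a$ to have two roots in $\mathbb{F}_{2^n}\setminus\{0,a\}$ together with $x=0$ or $x=a$ also solving $D_a f(x)=b=1/a$; substituting $b = 1/a$, the quadratic becomes $x^2 + ax + a^2 = 0$, i.e. $(x/a)^2 + (x/a) + 1 = 0$, which has a root in $\mathbb{F}_{2^n}$ precisely when $\mathbb{F}_4 \subseteq \mathbb{F}_{2^n}$, i.e. when $n$ is even.

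The main obstacle is the bookkeeping in the second step: one must carefully verify that when $n$ is odd, the contributions from $x \in \{0,a\}$ and from the quadratic never overlap to produce more than two solutions, and conversely that when $n$ is even one genuinely obtains three (in fact four) solutions for $b = 1/a$ — namely $x \in \{0, a\}$ together with the two roots of $x^2 + ax + a^2$. Once the count $\tilde N_f(a, 1/a) = 4$ is exhibited for even $n$ and bounded by $2$ for all $(a,b)$ when $n$ is odd, the equivalence follows. I would then simply cite \cite{BD1994almost, Nyberg1994differentially} for the full details rather than grind through the case analysis, since this is a known result quoted as background.
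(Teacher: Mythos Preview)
Your outline of the classical argument is correct, and your instinct at the end is exactly what the paper does: this proposition is stated without proof, attributed to Beth--Ding and Nyberg, and quoted purely as background before the odd-characteristic generalization. So there is nothing to compare against; the paper gives no proof of its own here.
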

This proposition is generalized as follows: 
\begin{proposition}
Let $ p $ be an odd prime. 
Then the inverse permutation on $ F $ is a GAPN function. 
\end{proposition}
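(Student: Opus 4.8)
The plan is to fix $a \in F^{\times}$ and reduce the counting of solutions of $\tilde{D}_a f(x) = b$ to counting solutions of a single $\mathbb{F}_p$-linear equation. Writing $x = ay$ with $y \in F$, and using $f(x) = x^{-1}$ on $F^{\times}$ together with $f(0) = 0$, one gets $\tilde{D}_a f(ay) = a^{-1} S(y)$, where $S(y) \coloneqq \sum_{i \in \mathbb{F}_p} (y+i)^{-1}$ with the convention $0^{-1} = 0$. So it suffices to show that for every $b \in F$ the fibre $\Set{ y \in F | S(y) = ab }$ has cardinality $0$ or $p$. (The algebraic degree of the inverse permutation is $n(p-1)-1$, far above $p$, so the criteria of Propositions \ref{lem:alg_criterion1} and \ref{lem:alg_criterion2} do not apply and we argue directly.)

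First I would dispose of the coset $y \in \mathbb{F}_p$. There $\Set{ y+i | i \in \mathbb{F}_p} = \mathbb{F}_p$, exactly one summand (the one with $y+i = 0$) is killed by the convention, and the remaining terms sum to $\sum_{c \in \mathbb{F}_p^{\times}} c^{-1} = \sum_{c \in \mathbb{F}_p^{\times}} c = 0$, the last equality because $p$ is odd. Hence $S$ vanishes identically on $\mathbb{F}_p$, i.e. $\tilde{D}_a f(x) = 0$ for all $x \in \mathbb{F}_p a$; these are the $p$ ``trivial'' solutions of $\tilde{D}_a f(x) = 0$, consistent in particular with $\tilde{D}_a f(0) = a^{-1}\sum_{i \ne 0} i^{-1} = 0$.

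Next, for $y \notin \mathbb{F}_p$ every summand of $S(y)$ is a genuine inverse, and I would evaluate $S(y)$ by a logarithmic-derivative computation: with $P(y) \coloneqq \prod_{i \in \mathbb{F}_p}(y+i) = y^p - y$ one has $S(y) = P'(y)/P(y)$, and $P'(y) = p\,y^{p-1} - 1 = -1$ in characteristic $p$, so $S(y) = -1/(y^p - y)$. Therefore, for $y \notin \mathbb{F}_p$ the equation $\tilde{D}_a f(ay) = b$ becomes $-1/(a(y^p - y)) = b$: it has no solution when $b = 0$, and for $b \ne 0$ it is equivalent to $y^p - y = -(ab)^{-1}$. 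The map $L \colon F \to F$, $L(y) = y^p - y$, is $\mathbb{F}_p$-linear with kernel $\mathbb{F}_p$, so each of its fibres is empty or a coset of $\mathbb{F}_p$, hence of size $0$ or $p$; moreover if $L^{-1}(c)$ is nonempty with $c \ne 0$ it is disjoint from $\mathbb{F}_p$.

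Combining the two ranges of $y$: for $b = 0$ the solution set is exactly $\mathbb{F}_p a$ (size $p$), and for $b \ne 0$ it is either empty or $a\cdot L^{-1}(-(ab)^{-1})$ (size $p$). In every case $\tilde{N}_f(a,b) \in \{0, p\}$ for all $a \in F^{\times}$ and $b \in F$, so $f$ is a GAPN function. I expect the only delicate point to be the bookkeeping around the convention $0^{-1} = 0$: it is precisely this convention that makes $S$ vanish on all of $\mathbb{F}_p$ rather than having a pole there, and one must separate the cases $y \in \mathbb{F}_p$ and $y \notin \mathbb{F}_p$ cleanly before applying the logarithmic-derivative identity, which is valid only where $P(y) \ne 0$.
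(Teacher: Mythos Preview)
Your proof is correct and reaches the same conclusion, but the route differs from the paper's. Both arguments split into the cases $x\in\mathbb{F}_p a$ and $x\notin\mathbb{F}_p a$, and both show that the former forces $b=0$. For the latter case the paper clears denominators: multiplying $\sum_i (x+ia)^{-1}=b$ by $\prod_i(x+ia)$ yields a polynomial equation of the form $b\prod_i(x+ia)+(\text{lower order})=0$; once one knows there is a solution $x_0\notin\mathbb{F}_p a$, the whole coset $x_0+\mathbb{F}_p a$ consists of solutions, which forces $b\neq0$ and shows the degree-$p$ polynomial has exactly those $p$ roots. Your approach instead evaluates the sum explicitly via the logarithmic-derivative identity $\sum_i(y+i)^{-1}=P'(y)/P(y)$ with $P(y)=y^p-y$, obtaining the closed form $S(y)=-1/(y^p-y)$ and then appealing to the $\mathbb{F}_p$-linearity of $y\mapsto y^p-y$. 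This is slightly more informative (you know precisely for which $b$ solutions exist, namely when $-(ab)^{-1}$ lies in the image of $y\mapsto y^p-y$), while the paper's argument avoids any explicit computation and relies only on a degree count. Your handling of the convention $0^{-1}=0$ and the separation of cases is clean and correct.
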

\begin{proof}
For convenience let $ 0^{-1} \coloneqq 0 $. 
We consider an equation 
\begin{align*}
\sum_{i \in \mathbb{F}_{p}}(x+ia)^{-1} = b, 
\end{align*}
where $ a \in F^{\times} $ and $ b \in F $. 
First suppose that there exists a solution $ x \not \in \mathbb{F}_{p}a $. 
Multiplying the equation by $ \prod_{i \in \mathbb{F}_{p}}(x+ia) $ we have  
\begin{align*}
b\prod_{i \in \mathbb{F}_{p}}(x+ia) + \text{( a polynomial in $ x $ with degree at most $ p-1 $)} = 0. 
\end{align*}
Since every element in $ x + \mathbb{F}_{p}a $ is a solution, we have $ b \neq 0 $ and the number of solutions outside $ \mathbb{F}_{p}a $ is exactly $ p $. 

Next we suppose that $ x \in \mathbb{F}_{p}a $ is a solution. 
Then we have 
\begin{align*}
b = \sum_{i \in \mathbb{F}_{p}}(x+ia)^{-1} = \sum_{i \in \mathbb{F}_{p}}(ia)^{-1} = a^{-1}\sum_{i=1}^{p-1} i^{-1} = a^{-1}\sum_{i = 1}^{p-1}i = 0. 
\end{align*}
Hence it is impossible that the equation has a solution in $ \mathbb{F}_{p}a $ and a solution outside $ \mathbb{F}_{p}a $ simultaneously. 
Therefore $ \tilde{N}_{f}(a,b) \leq p $ for any $ a \in F^{\times} $ and $ b \in F $, that is the inverse permutation is a GAPN function. 
\end{proof}

\subsection{Generalized Gold functions}
When $ p=2 $ the most typical quadratic APN functions are the Gold functions 
\cite{Gold1968maximal, Nyberg1994differentially}, which are defined by 
\begin{align*}
f(x) = x^{2^{i}+1} \text{ with } \gcd(n,i)=1. 
\end{align*}
In this subsection, we construct a generalization of the Gold function. 

\begin{proposition} \label{GGF}
Let $f$ be a monomial function defined by 
\begin{align*}
f (x) = x^{1 +  p^{i_2} + \cdots + p^{i_{p}}} \ \ 
(i_2, \dots, i_{p} \geq 0, \ (i_2, \dots, i_{p}) \not = (0, \cdots, 0) ). 
\end{align*}
Then 
\begin{enumerate}[label=(\roman*)]
\item $ d^{\circ}(f) \leq p $. 
\item
$\tilde{B}_f (x, a) 
= (p-1) 
\left( a^{d - 1} x + a^{d - p^{i_2} }x^{p^{i_2}} + \cdots + a ^{d - p^{i_{p}}}x^{p^{i_{p}}} \right)$ 
for any $a \in F^{\times}$, where 
$d = 1 +  p^{i_2} + \cdots + p^{i_{p}}$. 
\item Assume that $ \Set{ x \in F | x + x^{p^{i_2}} + \cdots + x^{p^{i_{p}}} = 0 } 
= \mathbb{F}_p $. 
Then $f$ is a GAPN function of algebraic degree $p$. 
\end{enumerate}
\end{proposition}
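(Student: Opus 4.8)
The plan is to prove (i), (ii), (iii) in turn, the only real computation being (ii). Throughout we may assume $0 \le i_j < n$, since reducing each $i_j$ modulo $n$ changes neither $f$ as a function on $F$ (because $x^{p^n}=x$) nor the powers $p^{i_j}$ as maps on $F$, hence leaves both the formula in (ii) and the set in (iii) unchanged. For (i): the $p$-weight is subadditive, $w_p(u+v) \le w_p(u)+w_p(v)$, because carrying during base-$p$ addition only lowers the digit sum; thus $d = 1 + p^{i_2} + \cdots + p^{i_p}$, being a sum of $p$ powers of $p$, satisfies $w_p(d) \le p$, and Proposition~\ref{prop:ch_algdeg}(3) gives $d^{\circ}(f) \le w_p(d) \le p$. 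In particular $\tilde{B}_{f}$ is linear in its first argument and Propositions~\ref{prop:Bf}--\ref{lem:alg_criterion2} apply.

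For (ii), I would compute $\tilde{B}_{f}(x,a)$ from Proposition~\ref{prop:Bf} as $\sum_{i \in \mathbb{F}_p} f(x+ia) - \sum_{i \in \mathbb{F}_p} f(ia)$. The second sum equals $a^d \sum_{i \in \mathbb{F}_p} i^d = 0$, because $d \equiv p \equiv 1 \pmod{p-1}$ is nonzero mod $p-1$ (we use $p$ odd; for $p=2$ this is the classical Gold computation). For the first sum, since every $i \in \mathbb{F}_p$ is fixed by the Frobenius,
\[
(x+ia)^d = (x+ia)\prod_{j=2}^{p}\bigl(x^{p^{i_j}} + i\,a^{p^{i_j}}\bigr).
\]
Expanding the product, a monomial obtained by taking the ``$i$-summand'' from exactly $k$ of the $p$ factors carries the scalar $i^k$; summing over $i \in \mathbb{F}_p$ and using that $\sum_{i \in \mathbb{F}_p} i^k$ equals $p-1$ when $k=p-1$ and $0$ for every other $k$ with $0 \le k \le p$, only the $p$ monomials in which exactly one factor contributes its ``$x$-summand'' survive, each with coefficient $p-1$. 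The first factor contributing $x$ yields $(p-1)a^{d-1}x$, and the $\ell$-th factor contributing $x^{p^{i_\ell}}$ yields $(p-1)a^{d-p^{i_\ell}}x^{p^{i_\ell}}$; summing these gives the stated formula.

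For (iii), by (i) and Proposition~\ref{lem:alg_criterion2}(1) it suffices to show $\Set{x \in F | \tilde{B}_{f}(x,a)=0} = \mathbb{F}_p a$ for every $a \in F^{\times}$. Fixing $a \in F^{\times}$ and substituting $x = ay$ in the formula of (ii), the identities $a^{d-1}(ay)=a^d y$ and $a^{d-p^{i_j}}(ay)^{p^{i_j}}=a^d y^{p^{i_j}}$ give $\tilde{B}_{f}(ay,a) = (p-1)a^d\bigl(y + y^{p^{i_2}} + \cdots + y^{p^{i_p}}\bigr)$; since $a^d \ne 0$ and $p-1 \ne 0$ in $\mathbb{F}_p$, this vanishes iff $y$ lies in $\Set{x \in F | x + x^{p^{i_2}} + \cdots + x^{p^{i_p}}=0} = \mathbb{F}_p$, i.e.\ iff $x \in \mathbb{F}_p a$. (The inclusion of $\mathbb{F}_p$ in that kernel is automatic, as $z^{p^{i_j}}=z$ and $pz=0$ for $z \in \mathbb{F}_p$, so the hypothesis only forbids a strictly larger kernel.) Hence $f$ is a GAPN function, and then $d^{\circ}(f)=p$ by Proposition~\ref{lem:alg_criterion2}(2). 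The step I expect to be the main obstacle is the expansion in (ii) --- keeping track of which monomials of $\prod_{j=2}^{p}(x^{p^{i_j}}+ia^{p^{i_j}})$ survive the summation over $\mathbb{F}_p$ and of the exponents of $a$ they carry; once the closed form for $\tilde{B}_{f}$ is in hand, (iii) is a single substitution and (i) is immediate.
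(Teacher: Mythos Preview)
Your proposal is correct and follows essentially the same route as the paper's proof: for (ii) both you and the paper expand $\prod_{\ell}(x^{p^{i_\ell}}+ja^{p^{i_\ell}})$, sum over $j\in\mathbb{F}_p$, and use the power-sum identity $\sum_{j\in\mathbb{F}_p}j^{k}=p-1$ for $k=p-1$ and $0$ otherwise (in the range $0\le k\le p$, $p$ odd), while (iii) is in both cases the substitution $x=ay$ followed by Proposition~\ref{lem:alg_criterion2}. The only cosmetic difference is in (i): you invoke subadditivity of $w_p$ to get $w_p(d)\le p$, whereas the paper asserts $w_p(d)=p$; your weaker bound already suffices, and in fact the equality does hold (since the $p$ exponents $0,i_2,\dots,i_p$ cannot all coincide), but neither argument needs it.
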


\begin{proof}
Since the $p$-weight of $f$ is $w_p (d) = p$, 
the statement (i) is clear from Lemma \ref{lem:order_algdeg_mono}. 
We prove the statement (ii). 
When $p=2$, 
we have 
\begin{align*}
\tilde{B}_f (x, a) 
&= f (x+a) + f (x) + f (a) + f(0) 
= (x+a) (x^{2^{i_2}} + a^{2^{i_2}}) + x^{1 + 2^{i_2}} + a^{1 + 2^{i_2}} \\
&= a x^{2^{i_2}} + a^{2^{i_2}} x. 
\end{align*}
When $p \geq 3$, let $i_1 = 0$. Then we have 
\begin{align*}
\tilde{D}_a f (0)
= \left( \sum_{j \in {\mathbb F}_p} j^{p^{i_1} + \cdots +p^{i_p}} \right)
a^{p^{i_1} + \cdots +p^{i_p}}
= \left( \sum_{j \in {\mathbb F}_p} j \right)
a^{p^{i_1} + \cdots +p^{i_p}} = 0. 
\end{align*}
Hence we obtain 
\begin{align*}
\tilde{B}_{f} (x, a) &= \tilde{D}_a f (x) - \tilde{D}_a f (0) 
= \tilde{D}_a f (x) 
= \sum_{j \in {\mathbb F}_p} 
\left( \prod_{\ell = 1}^{p} \left( x^{p^{i_\ell}} + (j a)^{p^{i_{\ell}}} \right) \right) \\
&= \sum_{j \in {\mathbb F}_p} 
\left( \prod_{\ell = 1}^{p} \left( x^{p^{i_\ell}} + j a^{p^{i_{\ell}}} \right) \right) 
= \sum_{j \in {\mathbb F}_p} 
\left( 
\sum_{K \subset [p]} 
j^{|K|} a^{\sum_{k \in K} p^{i_k}} x^{\sum_{k \in [p] \setminus K} p^{i_k}}
\right) \\
&= \sum_{K \subset [p]} 
\left( \sum_{j \in {\mathbb F}_p} j^{|K|} \right)
a^{\sum_{k \in K} p^{i_k}} x^{\sum_{k \in [p] \setminus K} p^{i_k}}. 
\end{align*}
Since we have 
$\displaystyle 
\sum_{j \in {\mathbb F}_p} j^{|K|} = \left\{ 
\begin{array}{cl}
0 & (|K| \not = p - 1), \\
p-1 & (|K| = p - 1),  
\end{array}
\right.
$ we obtain the desired equation. 

We prove the statement (iii). 
Since $a \not = 0$, by the assumption and (ii), we have 
\begin{align*}
\Set{ x \in F | \tilde{B}_f(x, a) = 0 }
= \Set{ a y |  
y + y^{p^{i_2}} + \cdots + y^{p^{i_{p}}} = 0 } 
= {\mathbb F}_p a. 
\end{align*}
Hence $f$ is a GAPN function with $d^{\circ} (f) = p$ by Proposition \ref{lem:alg_criterion2}. 
\end{proof}

By Proposition \ref{GGF}, we obtain a generalization of the Gold function: 

\begin{corollary} \label{GGF2}
Let $f : F \rightarrow F$ be a monomial function defined by 
\begin{align*}
f (x) = x^{p^i + p-1} \ \ (i > 0 \ \mbox{and} \ \gcd (i, n) = 1). 
\end{align*}
Then $f$ is a GAPN function of algebraic degree $p$. 
We call them the \textbf{generalized Gold functions}. 
\end{corollary}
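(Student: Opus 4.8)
The plan is to obtain Corollary \ref{GGF2} as a direct specialization of Proposition \ref{GGF}. First I would rewrite the exponent $d = p^i + p - 1$ in the shape required there: since $p - 1 = 1 + \cdots + 1$ ($p-1$ terms), we have $d = p^0 + \cdots + p^0 + p^i$ with $p-1$ copies of $p^0$, so that $d = 1 + p^{i_2} + \cdots + p^{i_p}$ with $i_2 = \cdots = i_{p-1} = 0$ and $i_p = i$. Because $i > 0$, the tuple $(i_2,\dots,i_p)$ is nonzero, so $f(x) = x^d$ satisfies the hypotheses of Proposition \ref{GGF}; in particular $w_p(d) = p$ and $d^{\circ}(f) \le p$.

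Next I would check the extra hypothesis of Proposition \ref{GGF}(iii), namely that
\[
\Set{ x \in F | x + x^{p^{i_2}} + \cdots + x^{p^{i_p}} = 0 } = \mathbb{F}_p .
\]
With the chosen exponents the linearized polynomial on the left is $x + \underbrace{x + \cdots + x}_{p-2} + x^{p^i} = (p-1)x + x^{p^i} = x^{p^i} - x$. Its zero set in $F = \mathbb{F}_{p^n}$ is the fixed set of the $i$-th power of the Frobenius automorphism, which is the subfield $\mathbb{F}_{p^{\gcd(i,n)}}$; since $\gcd(i,n) = 1$, this is exactly $\mathbb{F}_p$. Concretely, the zero set is closed under addition and multiplication by additivity of $x \mapsto x^{p^i}$, hence a subfield of $\mathbb{F}_{p^n}$; it is contained in $\mathbb{F}_{p^i}$ inside an algebraic closure, hence in $\mathbb{F}_{p^n} \cap \mathbb{F}_{p^i} = \mathbb{F}_{p^{\gcd(i,n)}}$, while $\mathbb{F}_p$ is always contained in it.

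Having verified this, Proposition \ref{GGF}(iii) immediately gives that $f$ is a GAPN function with $d^{\circ}(f) = p$, which is precisely the statement of the corollary.

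I do not anticipate any genuine obstacle: all the substantive work is already contained in Proposition \ref{GGF}, and the only non-formal point is identifying the zero set of $x^{p^i} - x$, which is a standard fact about finite fields. The content of the corollary is really the observation that the Gold-type exponent $p^i + p - 1$ has $p$-weight $p$, and that the coprimality condition $\gcd(i,n) = 1$ is exactly what forces the associated linearized polynomial to have kernel $\mathbb{F}_p$.
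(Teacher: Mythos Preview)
Your proof is correct and follows essentially the same approach as the paper: specialize Proposition~\ref{GGF} with $(i_2,\dots,i_p)=(0,\dots,0,i)$ and verify that the zero set of $x^{p^i}-x$ in $F$ is $\mathbb{F}_p$. The only cosmetic difference is that you identify this zero set as the subfield $\mathbb{F}_{p^{\gcd(i,n)}}$, whereas the paper reaches the same conclusion by computing $\gcd(p^i-1,p^n-1)=p-1$ and counting roots of $x^{p^i-1}=1$; these are two phrasings of the same standard fact.
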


\begin{proof}
In Proposition \ref{GGF}, let $(i_2, i_3,  \dots , i_{p}) = (i, 0, \cdots , 0)$ with $i > 0$. 
Then by (iii) in Proposition \ref{GGF}, 
the monomial function 
$
f (x) = x^{p^i + p-1}
$ 
is a GAPN function of algebraic degree $p$, if we have 
\begin{align} \label{roots of unity}
\left\{ x \in F \; \middle| \;  x^{p^{i}} = x \right\} 
= {\mathbb F}_p, \ \mbox{that is}, \  
\left\{ x \in F \; \middle| \;  x^{p^{i} - 1} = 1 \right\} 
= {\mathbb F}_p^{\times}. 
\end{align}
On the other hand, since $\gcd (i, n) = 1$, we have 
$\gcd (p^i - 1, p^n -1) = p-1$. 
In fact, 
we have 
$\displaystyle 
\gcd (p^i - 1, p^n -1) \mid \left( (p^n-1) - (p^i-1) \right) = \pm p^{\min \{i, n \}} 
\left( p^{|n-i|} - 1 \right), 
$
and hence we have $\gcd (p^i - 1, p^n -1) \mid \left( p^{|n-i|} - 1 \right)$. 
Since $\gcd (i, n) = 1$, by induction on $\max \{i, n \}$, we obtain 
$\gcd (p^i - 1, p^n -1) = p-1$. 
Therefore we have 
\begin{align*}
\# \left\{ x \in F \; \middle| \;  x^{p^{i} - 1} = 1 \right\} 
= \gcd (p^i -1, p^n-1) = p-1, 
\mbox{and hence, we obtain (\ref{roots of unity})}. 
\end{align*}
\end{proof}

When $p=2$, there are no quadratic APN functions on $\mathbb{F}_{2^n}$ of the form 
\begin{align*}
f (x) = \sum_{i = 1}^{n-1} c_i x^{2^i + 1}, \ \ \ c_i \in \mathbb{F}_{2^n}
\end{align*}
except the Gold functions \cite{BCCL2006almost}. 
Unfortunately, this property is not generalized for GAPN functions. 
In fact, we have 
\begin{proposition}
Assume that $p$ is an odd prime and $n$ is odd. 
Then the function $f \colon F \longrightarrow F$ defined by 
\begin{align*}
f(x) = x^{p^i + p-1} - x^{p^{n-i} + p-1} \ \ \ \left( i>0 \ \ \mbox{and} \ \ \gcd(i,n)=1 \right)
\end{align*}
is a GAPN function of algebraic degree $p$. 
\end{proposition}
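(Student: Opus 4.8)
The strategy is to verify the hypothesis of Proposition~\ref{lem:alg_criterion2}. Since $f$ is a difference of two monomials whose exponents $p^{i}+p-1$ and $p^{n-i}+p-1$ both have $p$-weight $p$, Proposition~\ref{prop:ch_algdeg}(3) gives $d^{\circ}(f)\le p$; hence it suffices to show that $\Set{x\in F | \tilde{B}_{f}(x,a)=0}={\mathbb F}_{p}a$ for every $a\in F^{\times}$, and then Proposition~\ref{lem:alg_criterion2} yields at once that $f$ is a GAPN function with $d^{\circ}(f)=p$.

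First I would compute $\tilde{B}_{f}$. Writing $g_{j}(x)=x^{p^{j}+p-1}$ and applying Proposition~\ref{GGF}(ii) with $(i_{2},\dots,i_{p})=(j,0,\dots,0)$ — so that, after collecting the $p-1$ summands carrying the exponent $p^{0}$,
\[
\tilde{B}_{g_{j}}(x,a)=(p-1)\bigl((p-1)\,a^{p^{j}+p-2}x+a^{p-1}x^{p^{j}}\bigr)
\]
— and using that $[\,\cdot\,]^{p}$ is additive in the function so that $\tilde{B}_{f}=\tilde{B}_{g_{i}}-\tilde{B}_{g_{n-i}}$, one gets, with $p-1\equiv-1\pmod p$,
\[
\tilde{B}_{f}(x,a)=(p-1)\,a^{p-1}\bigl(M_{a}(x)-N_{a}(x)\bigr),\qquad
M_{a}(x):=x^{p^{i}}-a^{p^{i}-1}x,\quad N_{a}(x):=x^{p^{n-i}}-a^{p^{n-i}-1}x.
\]
Since $a\ne0$, the equation $\tilde{B}_{f}(x,a)=0$ is equivalent to $M_{a}(x)=N_{a}(x)$; both $M_{a}$ and $N_{a}$ vanish precisely on ${\mathbb F}_{p}a$ (using $\gcd(p^{i}-1,p^{n}-1)=\gcd(p^{n-i}-1,p^{n}-1)=p-1$, as computed in the proof of Corollary~\ref{GGF2}), so ${\mathbb F}_{p}a$ is contained in the solution set, and the substance of the proof is the reverse inclusion.

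For the reverse inclusion I would substitute $x=ay$ and pass to Frobenius notation. Let $\sigma\colon z\mapsto z^{p}$, an automorphism of $F$ of order $n$, and set $g(y):=\sigma^{i}(y)-y$. Using $\sigma^{n-i}=\sigma^{-i}$ and $\sigma^{n-i}(y)-y=-\sigma^{-i}(g(y))$, the equation $M_{a}(ay)=N_{a}(ay)$ becomes $\sigma^{i}(a)\,g(y)=-\sigma^{-i}\!\bigl(a\,g(y)\bigr)$. Putting $w:=a\,g(y)$, this is the twisted relation $\sigma^{-i}(w)=-\tfrac{\sigma^{i}(a)}{a}\,w$, and an easy induction on $k$ gives $\sigma^{-ki}(w)=(-1)^{k}\tfrac{\sigma^{i}(a)}{\sigma^{-(k-1)i}(a)}\,w$. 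Taking $k=n$ and using $\sigma^{n}=\mathrm{id}$ together with $-(n-1)i\equiv i\pmod n$, one obtains $w=(-1)^{n}w$; since $n$ is odd this forces $2w=0$, and since $p$ is odd we conclude $w=0$. Hence $g(y)=0$, i.e.\ $y^{p^{i}}=y$, so $y\in{\mathbb F}_{p^{\gcd(i,n)}}={\mathbb F}_{p}$ and $x=ay\in{\mathbb F}_{p}a$, which completes the reverse inclusion.

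The only genuinely non-routine point is this reverse inclusion: one has to see that, after the two changes of variable, $M_{a}(x)=N_{a}(x)$ collapses to a first-order twisted recursion for $w$ under $\sigma^{-i}$, and that unwinding it once around the full cycle generated by $i$ modulo $n$ — a full cycle precisely because $\gcd(i,n)=1$ — leaves the accumulated multiplier equal to $(-1)^{n}$. This is exactly where both hypotheses, $n$ odd and $p$ odd, enter. Everything else is the coefficient bookkeeping of Proposition~\ref{GGF}(ii) and the appeal to Proposition~\ref{lem:alg_criterion2}.
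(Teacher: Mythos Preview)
Your proof is correct and takes a genuinely different route from the paper's. The paper factors $\tilde{D}_{a}f(x)=a^{p-1}x\bigl(-x^{p^{i}-1}+x^{p^{n-i}-1}+a^{p^{i}-1}-a^{p^{n-i}-1}\bigr)$ and reduces the question to showing that the map $\varphi(t)=t^{p^{i}-1}-t^{p^{n-i}-1}$ is $(p-1)$-to-$1$ on $F^{\times}$; this is proved in a separate lemma by writing ${\rm Fb}\circ\varphi=\psi_{2}\circ\psi_{1}$ with $\psi_{1}(t)=t^{p^{i}-1}$ (visibly $(p-1)$-to-$1$) and $\psi_{2}(\alpha)=\alpha^{p^{i}}-\alpha^{-1}$, and then arguing, via the oddness of $(p^{n}-1)/(p-1)$ when $n$ is odd, that $\psi_{2}$ is injective on $\mathrm{Im}(\psi_{1})=\langle\gamma^{p-1}\rangle$. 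Your argument instead substitutes $x=ay$, rewrites the equation in terms of the Frobenius $\sigma$, and iterates the single relation $\sigma^{-i}(w)=-\tfrac{\sigma^{i}(a)}{a}\,w$ exactly $n$ times so that the accumulated multiplier telescopes to $(-1)^{n}$. This is more direct and conceptually cleaner --- essentially a Hilbert~90-style descent --- and it makes the role of the two oddness hypotheses entirely transparent. One minor expository quibble: the iteration itself does not require $\gcd(i,n)=1$ (taking $k=n$ gives $\sigma^{-ni}=\mathrm{id}$ and multiplier $(-1)^{n}$ regardless), so despite your ``full cycle'' remark that hypothesis is only genuinely used at the final step, where $g(y)=0$ forces $y\in\mathbb{F}_{p^{\gcd(i,n)}}=\mathbb{F}_{p}$.
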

\begin{proof}
Clearly, $d^{\circ} (f) \leq p$, and $\tilde{D}_a f (0) = 0$ for any $a \in F^{\times}$. 
Thus all we have to do is to show that 
$\tilde{N}_f (a, 0) \leq p$ for any $a \in F^{\times}$. 
Then we have 
\begin{align*}
\tilde{D}_a f (x) 
&= \left( a^{p^{i} + p-2} x - a^{p-1} x^{p^i} \right) 
- \left( a^{p^{n-i} + p-2} x - a^{p-1} x^{p^{n-i}} \right) \\
&= a^{p-1} x \left( - x^{p^i - 1} + x^{p^{n-i}-1} + a^{p^i - 1} - a^{p^{n-i}-1} \right). 
\end{align*}
Hence it is sufficient to show that the equation $- x^{p^i - 1} + x^{p^{n-i}-1} + a^{p^i - 1} - a^{p^{n-i}-1}$ has only trivial $p-1$ solutions $a$, $2a$, $\dots$, $(p-1) a$ for any $a \in F^{\times}$. 
It follows immediately from Lemma \ref{(p-1)-to-1}. 
\end{proof}
\begin{lemma} \label{(p-1)-to-1}
The mapping $\varphi \colon F^{\times} \to F$ defined by 
$
\varphi(a) = a^{p^i - 1} - a^{p^{n-i}-1}
$ 
is $(p-1)$-to-$1$. 
\end{lemma}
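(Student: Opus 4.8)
The strategy is to show that every nonempty fibre of $\varphi$ is exactly one coset $a\mathbb{F}_p^{\times}$ of $\mathbb{F}_p^{\times}$ inside $F^{\times}$, which is precisely the statement that $\varphi$ is $(p-1)$-to-$1$. The first ingredient is the multiplicative symmetry of $\varphi$: for $\lambda\in\mathbb{F}_p^{\times}$ one has $\lambda^{p}=\lambda$, hence $\lambda^{p^{i}-1}=\lambda^{p^{n-i}-1}=1$, and therefore $\varphi(\lambda a)=\varphi(a)$. Thus each fibre of $\varphi$ is a union of cosets $a\mathbb{F}_p^{\times}$, so it is enough to prove that no fibre has more than $p-1$ elements; being nonempty and of cardinality divisible by $p-1$, it then has exactly $p-1$ elements.

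Next I would linearise the fibre equation. Fix $c\in F$. For $x\in F^{\times}$, multiplying $\varphi(x)=c$ by $x$ gives $x^{p^{i}}-x^{p^{n-i}}=cx$; raising this to the $p^{i}$-th power and using $x^{p^{n}}=x$ turns it into $x^{p^{2i}}-c^{p^{i}}x^{p^{i}}-x=0$, and raising that identity back to the $p^{n-i}$-th power recovers $x^{p^{i}}-x^{p^{n-i}}=cx$, so for $x\neq 0$ the two are equivalent. Hence the solutions of $\varphi(x)=c$ are exactly the nonzero roots in $F$ of the $\mathbb{F}_p$-linearised polynomial $M_c(X)=X^{p^{2i}}-c^{p^{i}}X^{p^{i}}-X$, and the task reduces to showing that $M_c$ has at most $p-1$ nonzero roots in $F$.

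The heart of the proof is the following. Let $x\in F^{\times}$ satisfy $M_c(x)=0$, let $\sigma$ denote the Frobenius $y\mapsto y^{p}$, and set $u_k:=x^{p^{ki\bmod n}}$ and $\gamma_k:=c^{p^{ki\bmod n}}$ for $k\in\mathbb{Z}$. Applying $\sigma^{ki}$ to $M_c(x)=0$ yields the second-order linear recurrence $u_{k+2}=\gamma_{k+1}u_{k+1}+u_k$ for all $k$, and since $x,c\in F$ one has $u_{k+n}=u_k$ and $\gamma_{k+n}=\gamma_k$. Therefore $(u_0,u_1)^{T}=(x,x^{p^{i}})^{T}$ is fixed by $\Phi_c:=B_{n-1}\cdots B_1B_0$, where $B_k=\bigl(\begin{smallmatrix}0&1\\1&\gamma_{k+1}\end{smallmatrix}\bigr)$ satisfies $\det B_k=-1$. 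Because $n$ is odd, $\det\Phi_c=(-1)^{n}=-1\neq 1$, so $\Phi_c\neq I$ and the $F$-subspace $\ker(\Phi_c-I)\subseteq F^{2}$ has dimension at most $1$. If this kernel is $\{0\}$ there is no nonzero root; if it is a line $F\cdot(a_0,a_1)^{T}$, then $a_0\neq 0$ and every nonzero root $x$ satisfies $x^{p^{i}-1}=a_1/a_0$, a fixed constant, so the number of such $x$ is at most $\gcd(p^{i}-1,p^{n}-1)=p-1$ (the last equality being the computation in the proof of Corollary~\ref{GGF2}, which uses $\gcd(i,n)=1$). Combined with the symmetry step, every nonempty fibre of $\varphi$ has exactly $p-1$ elements.

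The step I expect to be the real obstacle is the reformulation in the third paragraph: one has to encode the roots of $M_c$ in a way that lets the hypothesis ``$n$ odd'' intervene, and the cyclic second-order recurrence in the Frobenius conjugates of a root, with transition matrices of determinant $-1$, is exactly the device that achieves this through $\det\Phi_c=(-1)^{n}$. Everything else --- the symmetry, the passage to $M_c$, and the final $\gcd$ count --- is routine manipulation of Frobenius powers.
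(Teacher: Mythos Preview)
Your proof is correct and takes a genuinely different route from the paper's. The paper composes $\varphi$ with the Frobenius $x\mapsto x^{p^{i}}$ and factors the result as $\psi_2\circ\psi_1$ with $\psi_1(a)=a^{p^{i}-1}$ and $\psi_2(\alpha)=\alpha^{p^{i}}-\alpha^{-1}$; it then checks that $\psi_1$ is $(p-1)$-to-$1$ and that $\psi_2$ is injective on $\mathrm{Im}(\psi_1)=\langle\gamma^{p-1}\rangle$, the odd-$n$ hypothesis entering through the parity of $(p^{n}-1)/(p-1)$ to force a contradiction $1=-1$. Your argument instead linearises each fibre to roots of $M_c(X)=X^{p^{2i}}-c^{p^{i}}X^{p^{i}}-X$ and encodes those roots as fixed vectors of a product of $n$ transition matrices, each of determinant $-1$; the odd-$n$ hypothesis then enters cleanly as $\det\Phi_c=(-1)^{n}=-1\neq 1$ (using $p$ odd), making the fixed subspace at most a line and pinning down $x^{p^{i}-1}$. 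Your approach is more structural and makes the role of ``$n$ odd'' especially transparent via the determinant; the paper's decomposition is more hands-on but avoids the recurrence machinery. One small presentational point: the claim ``then $a_0\neq 0$'' holds only under the tacit assumption that a nonzero root exists (otherwise the line could be $F\cdot(0,1)^{T}$, and there is nothing to prove); you might want to make that explicit.
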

\begin{proof}
We consider the composition of $\varphi$ and the Frobenius automorphism 
${\rm Fb} (x) = x^{p^i}$. Then we have 
\begin{align*}
{\rm Fb} \circ \varphi (a) = \left( a^{p^i - 1} - a^{p^{n-i}-1} \right)^{p^i} 
= \left( a^{p^i - 1} \right)^{p^i} - \frac{1}{a^{p^{i}-1}} 
= \psi_2 \circ \psi_1 (a), 
\end{align*}
where $\psi_1$ and $\psi_2$ are defined by 
\begin{align*}
\psi_1 \colon F^{\times} \longrightarrow F^{\times}, \ \ 
a \longmapsto a^{p^i-1}, \ \ \mbox{and} \ \ 
\psi_2 \colon F^{\times} \longrightarrow F, \ \ 
\alpha \longmapsto \alpha^{p^i} - \frac{1}{\alpha}. 
\end{align*}
Since $\rm Fb$ is a bijection, it is sufficient to show the following two properties: 
\begin{itemize}
\item $\psi_1 \colon F^{\times} \rightarrow F^{\times}$ is a $(p-1)$-to-$1$ mapping. 
\item $\psi_2$ is injective on ${\rm Im} (\psi_1)$. 
\end{itemize}
We show the first property. 
For any two elements $a$ and $b \in F^{\times}$ such that $a^{p^i-1} = b^{p^i-1}$, 
we have $\left( a/b \right)^{p^i - 1} = 1$. 
Since $\gcd (i, n) = 1$, we obtain that 
$a/b$ is contained in $\mathbb{F}_p^{\times}$. 
Hence $\psi_1$ is a $(p-1)$-to-$1$ mapping. 
Next we show the second property. Since ${\rm Im} (\psi_1)$ is the subgroup of 
$F^{\times}$ whose cardinality equals $\frac{p^n-1}{p-1}$, we obtain 
${\rm Im} (\psi_1) = \langle \gamma^{p-1} \rangle$, 
where $\gamma$ is a generator of $F^{\times}$. 
Let $\gamma^{(p-1) m_1}$ and $\gamma^{(p-1) m_2}$ be two elements in ${\rm Im} (\psi_1)$ such that 
\begin{gather*}
\left( \gamma^{(p-1)m_1} \right)^{p^i} - \frac{1}{\gamma^{(p-1)m_1}}
= \left( \gamma^{(p-1)m_2} \right)^{p^i} - \frac{1}{\gamma^{(p-1)m_2}}, \\ 
\mbox{that is}, \ \ 
\gamma^{(p-1)(m_1 + m_2)} \left( \gamma^{(p-1)m_1} - \gamma^{(p-1)m_2} \right)^{p^i} 
= - \left( \gamma^{(p-1)m_1} - \gamma^{(p-1)m_2} \right). 
\end{gather*}
Assume that $\gamma^{(p-1)m_1} \neq \gamma^{(p-1)m_2}$. Then 
$\frac{p^n-1}{p-1} = 1 + p + \cdots + p^{n-1}$ is odd, since $n$ is odd. 
Hence we have 
\begin{align*}
\left( \left( \gamma^{(p-1)m_1} - \gamma^{(p-1)m_2} \right)^{\frac{p^n-1}{p-1}} \right)^{p^i-1} 
&= 
\left( 
\gamma^{(p-1)(m_1+m_2)} \left( \gamma^{(p-1)m_1} - \gamma^{(p-1)m_2} \right)^{p^i-1} 
\right)^{\frac{p^n-1}{p-1}} \\
&= \left( -1 \right)^{\frac{p^n-1}{p-1}} = - 1. 
\end{align*}
Since 
$\left( \gamma^{(p-1)m_1} - \gamma^{(p-1)m_2} \right)^{\frac{p^n-1}{p-1}}$ is a $(p-1)$-th root of unity and $p^i-1$ is divisible by $p-1$, we obtain $1 = -1$, which is absurd when $p$ is an odd prime. 
\end{proof}

\section{Relation to generalized almost bent functions}\label{sect:AB}

For a function $f \colon F \rightarrow F$, 
we define the \textbf{$p^n$-Walsh coefficients} of $f$ as follows: 
\begin{align*}
W_f (a, b) \coloneqq {\cal F} (\varphi _a + f_b) \ \ (a \in F, \ b \in F^{\times}), 
\end{align*}
where $\varphi_a$ is the components of the identity mapping on $F$. 
Similarly to the case that $p=2$, we define generalized almost bent functions. 

\begin{definition}
$f \colon F \rightarrow F$ is a \textbf{generalized almost bent} (GAB) function if 
\begin{align*}
W_f (a, b) \in \left\{ 0, \ \pm p^{\frac{n+1}{2}} \right\} \ \ \ 
\mbox{for all $a \in F$ and $b \in F^{\times}$. }
\end{align*}
\end{definition}

Note that when $p=2$, GAB functions coincide with AB functions. 
We have a characterization of GAB functions, 
which is a generalization of AB's one introduced in \cite{DF2003codes}. 

\begin{proposition} \label{GAB equiv}
Let $S^{(m)}_{a, b}$ be the number of solutions of the system of equations 
\begin{align*}
\left\{ \begin{array}{l}
x_1 + x_2 + \cdots + x_m  = a, \\
f (x_1) + f (x_2) + \cdots + f (x_m) = b. 
\end{array}
\right.
\end{align*}
Then 
$f$ is a GAB function if and only if 
\begin{align*}
S^{(3)}_{a, b} = \left\{ \begin{array}{cl}
p^n - p & (f (a) \ne b), \\
(p+1) p^n - p & (f (a) = b)
\end{array}
\right. \mbox{ for any $a$, $b \in F$. }
\end{align*}
\end{proposition}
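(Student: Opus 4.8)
The plan is to push both conditions through the additive characters of $F$ and reduce them to a single cubic identity for the Walsh coefficients. Put $\widehat f(u,v) \coloneqq \sum_{x \in F}\zeta_p^{\Tr(ux + vf(x))}$, so that $\widehat f(u,v) = W_f(u,v)$ for $v \in F^\times$, while $\widehat f(u,0) = p^n$ if $u = 0$ and $0$ otherwise. Applying the orthogonality relation $\sum_{u \in F}\zeta_p^{\Tr(uz)} = p^n\delta_{z,0}$ in each coordinate of the defining system, one gets
\begin{align*}
S^{(3)}_{a,b} = \frac1{p^{2n}}\sum_{u,v \in F}\zeta_p^{-\Tr(ua+vb)}\,\widehat f(u,v)^3 = p^n + \frac1{p^{2n}}\sum_{v \in F^\times}\sum_{u \in F}W_f(u,v)^3\,\zeta_p^{-\Tr(ua+vb)},
\end{align*}
where the term $p^n$ is exactly the contribution of $v=0$ (which forces $u=0$ and yields $p^{3n}/p^{2n}$). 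This identity holds for every function $f$.

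Next I record the elementary fact that over $\mathbb{C}$ the solutions of $t^3 = p^{n+1}t$ are precisely $0$ and $\pm p^{(n+1)/2}$, since $p^{n+1}>0$ has real square roots. Hence $f$ is a GAB function if and only if $W_f(u,v)^3 = p^{n+1}W_f(u,v)$ for all $u \in F$ and $v \in F^\times$. So it suffices to compare the cubic sum above with its linearisation $p^{n+1}\sum_{v \in F^\times, u \in F}W_f(u,v)\,\zeta_p^{-\Tr(ua+vb)}$, and the latter is merely a first moment that can be evaluated outright: expand $W_f(u,v) = \sum_x \zeta_p^{\Tr(ux+vf(x))}$, interchange the order of summation, and use $\sum_{u \in F}\zeta_p^{\Tr(u(x-a))} = p^n\delta_{x,a}$ to collapse to $x=a$; the leftover $\sum_{v \in F^\times}\zeta_p^{\Tr(v(f(a)-b))}$ equals $p^n\delta_{f(a),b}-1$. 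This gives
\begin{align*}
\sum_{v \in F^\times}\sum_{u \in F}W_f(u,v)\,\zeta_p^{-\Tr(ua+vb)} = p^n\big(p^n\delta_{f(a),b} - 1\big),
\end{align*}
so that $p^n + \tfrac{p^{n+1}}{p^{2n}}\,p^n(p^n\delta_{f(a),b}-1) = p^n - p + p^{n+1}\delta_{f(a),b}$, which is $p^n-p$ when $f(a)\ne b$ and $(p+1)p^n-p$ when $f(a)=b$: precisely the values asserted.

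Combining the two displays, $S^{(3)}_{a,b}$ has the stated form for all $a,b \in F$ if and only if
\begin{align*}
\sum_{v \in F^\times}\sum_{u \in F}\big(W_f(u,v)^3 - p^{n+1}W_f(u,v)\big)\,\zeta_p^{-\Tr(ua+vb)} = 0 \qquad \text{for all } a,b \in F,
\end{align*}
and since the characters $(a,b)\mapsto \zeta_p^{-\Tr(ua+vb)}$ of $F\times F$ are linearly independent (equivalently, by Fourier inversion on $F\times F$), this holds if and only if $W_f(u,v)^3 = p^{n+1}W_f(u,v)$ for every $u \in F$ and $v \in F^\times$, i.e.\ if and only if $f$ is GAB. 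The conceptual crux — and the only step that is not bookkeeping — is spotting that the GAB condition is exactly the statement that $W_f^3$ linearises to $p^{n+1}W_f$, which turns an a priori intractable cubic character sum into a one-line first-moment computation; the remaining care needed is just peeling off the $v=0$ term and using that $t^2=p^{n+1}$ over $\mathbb{C}$ forces $t=\pm p^{(n+1)/2}$, so I do not foresee a serious obstacle beyond these points.
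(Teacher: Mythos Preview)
Your proof is correct and follows essentially the same route as the paper: both arguments write $S^{(3)}_{a,b}$ as the inverse Fourier transform of $W_f(u,v)^3$, observe that the GAB condition is exactly the cubic identity $W_f^3 = p^{n+1}W_f$ for $v\neq 0$, and then invoke invertibility of the Fourier transform on $F\times F$ to pass between the two. The only cosmetic difference is packaging --- the paper encodes the same computations in matrix form ($W^{(m)} = X S^{(m)} X$ with $XX^* = p^n I$, and $XJX = p^{2n}E$), whereas you unwind the character sums directly and compute the first moment $\sum W_f(u,v)\,\zeta_p^{-\Tr(ua+vb)}$ by hand instead of via $S^{(1)}$.
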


\begin{proof}
We first define $p^n \times p^n$ matrices 
$W^{(m)}$, $S^{(m)}$, $E$ and $J$ which are indexed by elements in $F \times F$. 
The $(a, b)$-components of these matrices 
are as follows: 
\begin{gather*}
W^{(m)}_{a, b} := W_f (a, b)^m, \ 
S^{(m)}_{a b} := S^{(m)}_{a, b}, \ 
E_{a b} := \left\{ \begin{array}{cc}
1 & (a, b) = (0, 0) , \\
0 & \mbox{otherwise}. 
\end{array}
\right., \ 
J_{a b} := 1. 
\end{gather*}
By definition, $f$ is a GAB function if and only if 
\begin{align} \label{eq}
W_f (a, b) ^3 - p^{n + 1} W_f (a, b) = 0 \ \ (a \in F, \ b \in F^{\times}). 
\end{align}
Since if $b = 0$, then 
$\displaystyle W_f (a, 0) = \sum_{x \in F} \zeta_p^{{\rm Tr} (a x)} 
= \left\{ 
\begin{array}{cc}
p^n & (a = 0), \\
0 & (a \ne 0), 
\end{array}
\right.$
the equations (\ref{eq}) are equivalent to 
\begin{align} \label{eq2}
W^{(3)} - p^{n+1} W^{(1)} = \left( p^{3n} - p^{2 n + 1} \right) E. 
\end{align}
For any $m \in {\mathbb N}$, we have 
\begin{align*}
W_f (a, b) ^m 
&= \left( \sum_{x \in F} \zeta_p^{{\rm Tr} (a x) + {\rm Tr} (b f (x))}  \right)^m 
= \sum_{x_1, \dots, x_m \in F} 
\zeta_p^{{\rm Tr} (a (x_1 + \cdots + x_m))} \zeta_p^{ {\rm Tr} (b (f (x_1) + \cdots + f (x_m)))} \\
&= \sum_{s, t \in F} S^{(m)}_{s, t} \zeta_p^{{\rm Tr} (a s)} \zeta_p^{{\rm Tr} (b t)} 
= \sum_{s, t \in F} X_{a s} S^{(m)}_{st} X_{t b} , 
\end{align*}
where $X = \left[ X_{ab} \right]$ is defined in the proof of Proposition \ref{char_of_GAPN}. 
Hence we obtain 
\begin{align*}
W^{(m)} = X S^{(m)} X \ \ \ (m \in {\mathbb N}). 
\end{align*}
On the other hand, we have 
$\displaystyle XJX = \left[ \sum_{s, t \in F} X_{a s} J_{st} X_{tb} \right]$ and 
\begin{align*}
\sum_{s, t \in F} X_{a s} J_{st} X_{tb} 
= \sum_{s, t \in F} \zeta_p^{{\rm Tr} (a s + b t)} 
= \left\{ 
\begin{array}{cc}
p^{2 n} & ((a, b) = (0, 0)), \\
0 & (\mbox{otherwise}). 
\end{array}
\right.
\end{align*}
Hence $XJX = p^{2 n} E$. 
Therefore we obtain 
\begin{align*}
W^{(3)} - p^{n+1} W^{(1)} - (p^{3n} - p^{2 n+1}) E 
= X \left( S^{(3)} - p^{n+1} S^{(1)} - (p^n - p) J \right) X
\end{align*}
Then $X$ is regular, since $X X^{*} = p^n I$. 
Therefore the equation (\ref{eq2}) is equivalent to 
\begin{align*} 
S^{(3)} = p^{n+1} S^{(1)} + (p^n - p) J , 
\end{align*}
that is, 
$\displaystyle 
S_{a, b}^{(3)} =\left\{ \begin{array}{cl}
p^n - p & (f (a) \ne b), \\
(p+1) p^n - p & (f (a) = b)
\end{array}
\right.$ for any $a$, $b \in F$ since we have clearly 
$S^{(1)}_{a, b} 
= \left\{ \begin{array}{cl}
0 & (f (a) \ne b), \\
1 & (f (a) = b). 
\end{array}
\right.$ 
\end{proof}

\subsection{The case that $p=3$}

In this subsection, we assume that $p=3$ and 
\begin{align} \label{odd function}
f (-x) = - f (x) \ \ \mbox{for any $x \in F = {\mathbb F}_{3^n}$}. 
\end{align}
Then we have $f (0) = 0$ clearly. 
We have the following theorem which is a partial generalization of a relation between APN functions and AB functions introduced in \cite{CV1995links}. 

\begin{theorem} \label{main}
Let $f  \colon F \rightarrow F$ be a function with (\ref{odd function}). 
Assume that $d^{\circ} (f) \leq 3$. 
If $f$ is a GAB function, then $f$ is a GAPN function of algebraic degree $3$. 
\end{theorem}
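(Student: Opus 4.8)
The plan is to combine the low‑degree GAPN criterion of Proposition~\ref{lem:alg_criterion1} with the combinatorial description of GAB functions in Proposition~\ref{GAB equiv}, the two being linked by a counting identity that is special to characteristic $3$. From the hypotheses, $f(-x)=-f(x)$ gives $f(0)=0$ and, for every $a\in F^{\times}$,
\[
\tilde{D}_a f(0)=f(0)+f(a)+f(-a)=0 .
\]
Since $0$ is then a solution of $\tilde{D}_a f(x)=\tilde{D}_a f(0)$, the whole coset $\mathbb{F}_p a$ consists of solutions, so $\tilde{N}_f(a,0)\ge p$ for each $a\in F^{\times}$. Also $d^{\circ}(f)\le 3=p$, so Proposition~\ref{lem:alg_criterion1} applies: for all $a\in F^{\times}$ and $b\in F$, $\tilde{N}_f(a,b)$ equals $0$ or $\tilde{N}_f(a,\tilde{D}_a f(0))=\tilde{N}_f(a,0)$. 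Hence it is enough to prove that $\tilde{N}_f(a,0)=p$ for every $a\in F^{\times}$: then $\tilde{N}_f(a,b)\in\{0,p\}$ for all $a,b$, so $f$ is a GAPN function, and $d^{\circ}(f)=3$ follows from Proposition~\ref{lem:alg_criterion2}(2).

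The heart of the argument is the identity, valid for every $b\in F$,
\[
S^{(3)}_{0,b}=\sum_{a\in F^{\times}}\tilde{N}_f(a,b)+\begin{cases} p^n & (b=0),\\ 0 & (b\ne 0).\end{cases}
\]
To see this, classify ordered triples $(x_1,x_2,x_3)$ with $x_1+x_2+x_3=0$: either they are constant, in which case $f(x_1)+f(x_2)+f(x_3)=3f(x_1)=0$, so all $p^n$ of them fall in the fibre $b=0$; or they have three distinct entries, and then, in characteristic $3$, $x_3-x_2=-x_1-2x_2=x_2-x_1$, so the triple is an arithmetic progression $(y,y+a,y-a)$ with $y=x_1$ and $a=x_2-x_1\in F^{\times}$. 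The map $(y,a)\mapsto(y,y+a,y-a)$ is a bijection from $F\times F^{\times}$ onto the set of such triples, and $f(y)+f(y+a)+f(y-a)=\sum_{i\in\mathbb{F}_3}f(y+ia)=\tilde{D}_a f(y)$; summing over the pairs $(y,a)$ with $\tilde{D}_a f(y)=b$ produces exactly $\sum_{a\in F^{\times}}\tilde{N}_f(a,b)$, which yields the displayed identity.

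Now invoke the GAB hypothesis. By Proposition~\ref{GAB equiv} and $f(0)=0$ we have $S^{(3)}_{0,0}=(p+1)p^n-p$, so the identity at $b=0$ gives
\[
\sum_{a\in F^{\times}}\tilde{N}_f(a,0)=(p+1)p^n-p-p^n=p^{n+1}-p=p\,(p^n-1).
\]
This sum has exactly $p^n-1$ terms, each at least $p$; therefore every term equals $p$, i.e.\ $\tilde{N}_f(a,0)=p$ for all $a\in F^{\times}$, completing the proof.

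I do not expect a serious obstacle once the bridge is fixed; the step requiring the most care is the counting identity — in particular checking that in characteristic $3$ a zero‑sum triple of distinct points is automatically an arithmetic progression and that $(y,a)\mapsto(y,y+a,y-a)$ is a genuine bijection, so that the constant triples are cleanly isolated in the $b=0$ fibre and no progression is double‑counted. The GAB hypothesis is then used only through the single value $S^{(3)}_{0,0}$, and the conclusion is a one‑line averaging argument. Note that $f(-x)=-f(x)$ is needed precisely to force $\tilde{D}_a f(0)=0$ uniformly in $a$ (so that the relevant counts concentrate at $b=0$), while $d^{\circ}(f)\le 3$ is used exactly to apply Propositions~\ref{lem:alg_criterion1} and~\ref{lem:alg_criterion2}.
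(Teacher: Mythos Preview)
Your proof is correct and follows essentially the same route as the paper: both use the oddness hypothesis to force $\tilde{D}_a f(0)=0$, invoke Proposition~\ref{GAB equiv} at $(a,b)=(0,0)$ to get $S^{(3)}_{0,0}=(p+1)p^n-p$, exploit that in characteristic~$3$ every zero-sum triple is either constant or an arithmetic progression $(y,y+a,y+2a)$, and then conclude via Propositions~\ref{lem:alg_criterion1} and~\ref{lem:alg_criterion2}. The only cosmetic difference is that the paper enumerates the ``trivial'' solutions explicitly and argues by contradiction, whereas you package the same count as the identity $S^{(3)}_{0,0}=\sum_{a\in F^{\times}}\tilde{N}_f(a,0)+p^n$ and finish with an averaging step; the content is identical.
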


\begin{proof}
Let $f$ be a GAB function. Since $f (0) = 0$, the system of equations 
\begin{align} \label{key system}
\left\{ 
\begin{array}{c}
x_1 + x_2 + x_3 = 0, \\
f(x_1) + f (x_2) + f (x_3) = 0
\end{array}
\right.
\end{align}
has $(3 + 1) 3^n - 3 = 3 (3^n - 1) + 3^n$ solutions by Proposition \ref{GAB equiv}. 
Since for any $b \in F$, 
\begin{align*}
f (0) + f (b) + f (2b) = f (b) + f (-b) = f(b) - f (b) = 0, 
\end{align*}
the solutions of (\ref{key system}) are only trivial solutions, that is 
\begin{gather} 
\Set{ (0, b, 2b),  (b, 2b, 0), (2b, 0, b) | b \in F^{\times} }, \ 
\Set{ (x, x, x) | x \in F }. \label{trivial solutions}\end{gather}
Assume that $f$ is not a GAPN function. 
Then by Proposition \ref{lem:alg_criterion1}, 
$\tilde{D}_a f (x) = \tilde{D}_a f (0)$ has a nontrivial solution 
$x_0 \in F \setminus \Set{ 0, a, 2a }$ 
for some $a \in F^{\times}$. 
On the other hand, by (\ref{odd function}), we have 
$\tilde{D}_a f (0) = 0$. 
Hence $(x_0, x_0+a, x_0+2 a)$ is a solution of the system (\ref{key system}), but 
this solution is not contained in any set of (\ref{trivial solutions}), 
which is absurd. 
Therefore $f$ is a GAPN function, and we have $d^{\circ} (f) = 3$ 
by Proposition \ref{lem:alg_criterion2}. 
\end{proof}

\begin{remark} \label{bad points}
\begin{enumerate}[label=(\arabic*)]
\item When $p=2$, any AB function is APN by \cite{CV1995links}. 
However the assumption of Theorem \ref{main} is necessary. 
In fact, there exists a function $f$ on ${\mathbb F}_{3^n}$ such that it is a GAB function but not a GAPN function when $d^{\circ} (f) > 3$. 
For example, let $n = 5$ and ${\mathbb F}_{3^5} = {\mathbb F}_{3} (\alpha)$ with $\alpha ^5 + 2 \alpha + 1 = 0$. 
Then the function $f : {\mathbb F}_{3^5} \to {\mathbb F}_{3^5}$ defined by $f(x) = x^{17}$ 
is a GAB function by a simple computation. However, we have 
$\left\{ x \in {\mathbb F}_{3^5} \mid D_1 f (x)  = 0 \right\} = \left\{ 0, 1, 2 \right\}$ and 
\begin{gather*}
\left\{ x \in {\mathbb F}_{3^5} \mid D_1 f (x)  = \alpha^3 + 2 \alpha ^2 + \alpha + 1 \right\} 
= \left\{ 2 \alpha + j, 
\alpha ^4 + \alpha ^3 + j 
\mid j \in {\mathbb F}_{3} \right\} . 
\end{gather*}
Thus $\tilde{N}_f (1, 0) = 3$ and $\tilde{N}_f (1, \alpha^3 + 2 \alpha ^2 + \alpha + 1) = 6$. 
Hence $f$ is not a GAPN function, and $d^{\circ} (f) > 3$ by Proposition \ref{lem:alg_criterion1}. 

\item 
When $p=2$, any quadratic APN function on ${\mathbb F}_{2^n}$ is an AB function if $n$ is odd by  \cite{BCCL2006almost}. 
Unfortunately, this property is not generalized in our case, that is, 
there exists a function $f$ on ${\mathbb F}_{3^n}$ such that $f$ is a GAPN function of algebraic degree $3$ but not a GAB function. 
In particular, the converse of Theorem \ref{main} is not true. 
For example, the function $f : {\mathbb F}_{3^5} \to {\mathbb F}_{3^5}$ defined by 
$f(x) = x^{11}$ is a GAPN function of algebraic degree $3$ (see Corollary \ref{GGF2}). 
However by a simple computation, we can see that the set of all Walsh coefficients of $f$ is 
$\left\{ 0, -9, 18, \pm 27, -36, 45, - 54 \right\}$, 
and hence $f$ is not a GAB function. 
\end{enumerate}
\end{remark}

\section{Construction of dual arcs}\label{sect:DA}

Let $ V $ be a vector space over a finite field $ \mathbb{F}_{q} $. 
A collection $ \mathcal{S} $ of $ m $-dimensional subspaces of $ V $ is called an $ (m-1) $-\textbf{dimensional dual arc} over $ \mathbb{F}_{q} $ if the following conditions are satisfied: 
\begin{enumerate}[label=(\roman*)]
\item $ \dim (X \cap Y) = 1 $ for any different $ X, Y \in \mathcal{S} $. 
\item $ X \cap Y \cap Z = 0 $ for any three mutually different $ X,Y,Z \in \mathcal{S} $. 
\end{enumerate}
If $ |\mathcal{S}|=(q^{m}-q)/(q-1)+1 $ then $ \mathcal{S} $ is called an $ (m-1) $-\textbf{dimensional dual hyperoval}. 

Let $ f $ be a quadratic function on $ \mathbb{F}_{2^{n}} $. 
We regard $ \mathbb{F}_{2^{n}} $ as an $ n $-dimensional vector space over $ \mathbb{F}_{2} $.  For every $ a \in \mathbb{F}_{2^{n}} $ we define a set $ X_{f}(a) \subset \mathbb{F}_{2^{n}} \oplus \mathbb{F}_{2^{n}} $ by 
\begin{align*}
X_{f}(a) \coloneqq \Set{(x,B_{f}(x,a)) \mid x \in \mathbb{F}_{2^{n}}}, 
\end{align*}
where $ B_{f}(x,a)=f(x+a)+f(x)+f(a)+f(0) $. 
Since $ f $ is quadratic, the form $ B_{f} $ is bilinear and the map $ x \mapsto (x, B_{f}(x,a)) $ is a injective linear map.  
Hence $ X_{f}(a) $ is $ n $-dimensional subspace in $ \mathbb{F}_{2^{n}} \oplus \mathbb{F}_{2^{n}} $ for every $ a \in \mathbb{F}_{2^{n}} $.
Let $ \mathcal{S}_{f} $ denote the collection of subspaces $ X_{f}(a) $. 
Yoshiara characterized quadratic APN functions on $ \mathbb{F}_{2^{n}} $ as follows: 
\begin{theorem}[Yoshiara {\cite[Theorem 2.1]{Yoshiara2008dimensional}}]
Let $ f \colon \mathbb{F}_{2^{n}} \rightarrow \mathbb{F}_{2^{n}} $ be a quadratic function. 
Then $ f $ is APN if and only if $ \mathcal{S}_{f} $ is an $ (n-1) $-dimensional dual hyperoval. 
\end{theorem}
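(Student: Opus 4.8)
The plan is to route the whole equivalence through the single $\mathbb{F}_{2}$-linear object $K_{c} \coloneqq \Set{x \in \mathbb{F}_{2^{n}} | B_{f}(x,c)=0}$, one for each $c \in \mathbb{F}_{2^{n}}$. Since $B_{f}$ is symmetric and $B_{f}(c,c)=f(0)+f(c)+f(c)+f(0)=0$, we always have $\Set{0,c}\subseteq K_{c}$, and because the ground field is $\mathbb{F}_{2}$ the condition $\dim_{\mathbb{F}_{2}}K_{c}=1$ is literally the condition $K_{c}=\Set{0,c}$. A quadratic $f$ has $d^{\circ}(f)=2\leq 2$, so part (1) of Proposition \ref{lem:alg_criterion2} (with $p=2$, where $\tilde{B}_{f}=B_{f}$) already records the first reduction: $f$ is APN if and only if $K_{c}=\Set{0,c}$ for every $c\in\mathbb{F}_{2^{n}}^{\times}$. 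Everything else is a matter of rewriting conditions (i) and (ii) of the definition of a dual arc in terms of the $K_{c}$.

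First I would compute a two-fold intersection. Writing $\iota_{a}\colon u\mapsto(u,B_{f}(u,a))$ for the injective linear parametrization of $X_{f}(a)$, a point $(u,v)$ lies in $X_{f}(a)\cap X_{f}(a')$ exactly when $v=B_{f}(u,a)$ and $B_{f}(u,a)=B_{f}(u,a')$; by bilinearity and $\mathrm{char}=2$ the second condition says $u\in K_{a+a'}$. Hence $X_{f}(a)\cap X_{f}(a')=\iota_{a}(K_{a+a'})$, so $\dim(X_{f}(a)\cap X_{f}(a'))=\dim_{\mathbb{F}_{2}}K_{a+a'}$. If $f$ is APN, this dimension is $1$ for every $a\neq a'$, whence the $X_{f}(a)$ are pairwise distinct and $\#\mathcal{S}_{f}=2^{n}$ --- the maximal value permitted by (i)--(ii), hence exactly the cardinality of an $(n-1)$-dimensional dual hyperoval --- and condition (i) holds. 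For condition (ii), take $a,a',a''$ pairwise distinct; then $X_{f}(a)\cap X_{f}(a')=\iota_{a}(K_{a+a'})=\Set{(0,0),\iota_{a}(a+a')}$, so the only candidate for a nonzero point of the triple intersection is $\iota_{a}(a+a')$, and $\iota_{a}(a+a')\in X_{f}(a'')$ would force $a+a'\in K_{a+a''}=\Set{0,a+a''}$, i.e.\ $a'=a''$, which is excluded. So $X_{f}(a)\cap X_{f}(a')\cap X_{f}(a'')=0$, and $\mathcal{S}_{f}$ is an $(n-1)$-dimensional dual hyperoval.

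For the converse, assume $\mathcal{S}_{f}$ is an $(n-1)$-dimensional dual hyperoval. Then it already carries the maximal number of members, so $a\mapsto X_{f}(a)$ is injective; hence every $c\in\mathbb{F}_{2^{n}}^{\times}$ can be written as $c=a+a'$ with $X_{f}(a)\neq X_{f}(a')$, and condition (i) together with the intersection formula yields $\dim K_{c}=1$, that is $K_{c}=\Set{0,c}$. As this holds for all nonzero $c$, Proposition \ref{lem:alg_criterion2} gives that $f$ is APN. I do not expect a genuine difficulty; the point that needs care is that condition (i) on its own pins down $\dim K_{c}$ only for $c$ outside the radical of $B_{f}$, so in the converse direction one really must first use that a dual hyperoval attains the maximal number of members (equivalently, that $B_{f}$ is non-degenerate) before one can conclude $K_{c}=\Set{0,c}$ for \emph{every} nonzero $c$ --- and it is exactly this $\mathbb{F}_{2}$-linear bookkeeping, together with the triple-intersection computation for (ii), that carries the proof.
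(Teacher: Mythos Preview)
The paper does not supply its own proof of this theorem; it is quoted from Yoshiara without argument, as motivation for the generalized construction in Section~\ref{sect:DA}. Your proof is correct and is essentially Yoshiara's: reduce both the APN property and the dual hyperoval axioms to the single statement $K_c=\{0,c\}$ for every $c\neq 0$ via the intersection formula $X_f(a)\cap X_f(a')=\iota_a(K_{a+a'})$. It also mirrors the paper's proof of its own generalization (the final theorem of Section~\ref{sect:DA}), which computes pairwise and triple intersections of the $X_{f,\mu,\nu}(a)$ in exactly the same way; the only substantive difference is that the paper establishes only the forward direction there, so your treatment of the converse has no analogue in the text.

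One bookkeeping caveat: the paper's stated cardinality for an $(m-1)$-dimensional dual hyperoval is $(q^{m}-q)/(q-1)+1$, which for $q=2$ gives $2^{n}-1$, not the standard value $(q^{m}-1)/(q-1)+1=2^{n}$ that you use when you infer injectivity of $a\mapsto X_{f}(a)$ from the size of $\mathcal{S}_{f}$. This is almost certainly a typo in the paper --- indeed $|\mathcal{S}_{f}|=2^{n}/|\mathrm{rad}\,B_{f}|$ is always a power of $2$, so $2^{n}-1$ cannot occur --- but since your converse argument hinges on this count, you should either cite the standard definition or note explicitly that $|\mathcal{S}_{f}|>2^{n-1}$ already forces $\mathrm{rad}\,B_{f}=0$ and hence $|\mathcal{S}_{f}|=2^{n}$.
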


Although the bilinearlity of $ B_{f} $ is very useful, the form $ \tilde{B}_{f} $ is hardly bilinear for $ p \geq 3 $.
We may resolve this problem with some modification. 
Let $ \mu $ be a map from $ F^{\times}=\mathbb{F}_{p^{n}}^{\times} $ to the set of $ \mathbb{F}_{p} $-linear automorphisms on $ F $ 
and let $\mu_a$ denote the image of $a$ by $\mu$. 
Let $ \nu $ be a permutation on $ F $ fixing $ 0 $. 

For such maps $ \mu, \nu $ and a function $ f \colon F \rightarrow F $, we define 
\begin{align*}
\tilde{B}_{f,\mu,\nu}(x,a) \coloneqq 
\left\{ \begin{array}{cl}
(\mu_{a} \circ \tilde{B}_{f})(x,\nu(a)) & (a \neq 0), \\
0 & (a=0). 
\end{array} \right.
\end{align*}
Note that for any $ a \in F^{\times} $ we have $ \tilde{B}_{f,\mu,\nu}(x,a)=0 $ if and only if $ \tilde{B}_{f}(x,\nu(a))=0 $. 
Hence when $ d^{\circ}(f) \leq p $ we have that $ f $ is a GAPN function if and only if
\begin{align*}
\Set{x \in F | \tilde{B}_{f,\mu,\nu}(x,a)=0} = \mathbb{F}_{p}\nu(a) \text{ for any } a \in F^{\times}
\end{align*} by Proposition \ref{lem:alg_criterion2}.

\begin{proposition}
Let $ f(x)=x^{d} $ be a monomial function with $ d^{\circ}(f) \leq p $. 
Define maps $ \mu, \nu $ by $ \mu_{a}(x)=a^{d}x $ and $ \nu(a)=a^{-1} $. 
Then $ \tilde{B}_{f,\mu,\nu}(x,a) $ is bilinear. 
\end{proposition}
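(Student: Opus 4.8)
The plan is to produce an explicit formula for $\tilde{B}_{f,\mu,\nu}(x,a)$ and read off $\mathbb{F}_p$-bilinearity directly from it. Write $f(x)=x^{d}$. I would first dispose of the degenerate case: the hypothesis $d^{\circ}(f)\le p$ forces $w_p(d)\le p$, and if $w_p(d)<p$ then $[f]^{p}=0$ by Lemma \ref{lem:order_algdeg_mono} (here $p\ge w_p(d)+1$), so $\tilde{B}_f\equiv 0$ and hence $\tilde{B}_{f,\mu,\nu}\equiv 0$, which is trivially bilinear. So from now on I would assume $w_p(d)=p$ and write $d=p^{s_1}+\cdots+p^{s_p}$ with $s_1,\dots,s_p\ge 0$ not necessarily distinct.

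The key step is the combinatorial identification of $\tilde{B}_f$ for a monomial. By definition $\tilde{B}_f(x,a)=[x^{d}]^{p}(x,a,\dots,a)$, and Lemma \ref{lem:order_algdeg_mono} with $m=w=p$ gives
\begin{align*}
\tilde{B}_f(x,a)=\sum_{\sigma\in\mathfrak{S}_p} x^{p^{s_{\sigma(1)}}}\, a^{\,p^{s_{\sigma(2)}}+\cdots+p^{s_{\sigma(p)}}}.
\end{align*}
Collecting the $p!$ summands according to the value $k:=\sigma(1)$ — there are $(p-1)!$ permutations $\sigma$ for each $k$ — and using $\sum_{j\ne k}p^{s_j}=d-p^{s_k}$ together with Wilson's congruence $(p-1)!\equiv -1\equiv p-1\pmod p$, this collapses to
\begin{align*}
\tilde{B}_f(x,a)=(p-1)\sum_{k=1}^{p} a^{\,d-p^{s_k}}\, x^{p^{s_k}},
\end{align*}
which is exactly Proposition \ref{GGF}(ii) in the special case treated there.

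Finally, for $a\in F^{\times}$ I would substitute $\nu(a)=a^{-1}$ and apply $\mu_a(z)=a^{d}z$:
\begin{align*}
\tilde{B}_{f,\mu,\nu}(x,a)=a^{d}\,\tilde{B}_f(x,a^{-1})
=(p-1)\sum_{k=1}^{p} a^{d}\cdot a^{-(d-p^{s_k})}\, x^{p^{s_k}}
=(p-1)\sum_{k=1}^{p} a^{p^{s_k}}\, x^{p^{s_k}}.
\end{align*}
The right-hand side also vanishes at $a=0$ (each $s_k\ge 0$), so the identity $\tilde{B}_{f,\mu,\nu}(x,a)=(p-1)\sum_{k=1}^{p} a^{p^{s_k}} x^{p^{s_k}}$ holds for all $a\in F$, in agreement with the definition of $\tilde{B}_{f,\mu,\nu}$ on $\{a=0\}$. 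Each summand is the product of the $\mathbb{F}_p$-linear map $a\mapsto a^{p^{s_k}}$ with the $\mathbb{F}_p$-linear map $x\mapsto x^{p^{s_k}}$ (Frobenius powers, which fix $\mathbb{F}_p$ pointwise), so the whole expression is $\mathbb{F}_p$-bilinear in $(x,a)$, as claimed.

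The computation itself is short; the only real content is the use of Lemma \ref{lem:order_algdeg_mono} to pin down $\tilde{B}_f$ for a monomial, and the one point I would be careful about is that the hypothesis $d^{\circ}(f)\le p$ (equivalently $w_p(d)\le p$) is precisely what licenses applying that lemma at $m=p$: if $w_p(d)>p$, then $[x^{d}]^{p}(x,a,\dots,a)$ need not even be linear in $x$, and the resulting form $\tilde{B}_{f,\mu,\nu}$ can fail to be bilinear.
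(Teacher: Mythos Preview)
Your proof is correct and follows the same overall strategy as the paper: identify the shape of $\tilde{B}_f(x,a)$ for a monomial, substitute $\nu(a)=a^{-1}$ and multiply by $a^d$, and observe that the result is a sum of terms $(xa)^{p^i}$.

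The only difference is one of economy. You compute the coefficients of $\tilde{B}_f$ explicitly by invoking Lemma~\ref{lem:order_algdeg_mono} at $m=w=p$ and then collapsing the $\mathfrak{S}_p$-sum via Wilson's theorem, which necessitates your case split on $w_p(d)$ (and implicitly uses $d^{\circ}(x^d)=w_p(d)$ to rule out $w_p(d)>p$). The paper sidesteps all of this: it simply notes that $d^{\circ}(f)\le p$ makes $[f]^p$ $\mathbb{F}_p$-multilinear (so $\tilde{B}_f(x,a)$ is linear in $x$) and that $f(x)=x^d$ makes $\tilde{B}_f(x,a)$ homogeneous of degree $d$ in $(x,a)$, forcing the shape $\tilde{B}_f(x,a)=\sum_i c_i\, x^{p^i} a^{d-p^i}$ for some constants $c_i$ without ever computing them; the substitution then gives $\sum_i c_i (xa)^{p^i}$, visibly bilinear. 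Your explicit formula $(p-1)\sum_k (xa)^{p^{s_k}}$ is of course a special case of this, with the bonus that you actually know the $c_i$'s.
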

\begin{proof}
Since $ d^{\circ}(f)\leq p $ the form $ [f]^{p} $ is multilinear. 
Hence $ \tilde{B}_{f}(x,a) $ is linear in $ x $. 
Moreover $ \tilde{B}_{f}(x,a) $ is homogeneous of degree $ d $ as a polynomial in $ x $ and $ a $. 
Therefore 
\begin{align*}
\tilde{B}_{f}(x,a)=\sum_{i}c_{i}x^{p^{i}}a^{d-p^{i}}
\end{align*}
for some $ c_{i} \in \mathbb{F}_{p} $. 
Then
\begin{align*}
\tilde{B}_{f,\mu,\nu}(x,a)=(\mu_{a}\circ \tilde{B}_{f})(x,\nu(a))
=a^{d}\left(\sum_{i}c_{i}x^{p^{i}}a^{p^{i}-d}\right)
=\sum_{i}c_{i}(xa)^{p^{i}}, 
\end{align*}
which is bilinear. 
\end{proof}

For the generalized Gold functions, we have another choice of maps $ \mu,\nu $ such that $ \tilde{B}_{f, \mu, \nu} $ is bilinear. 
\begin{proposition}
Let $ f(x)=x^{p^{i}+p-1} $ be the generalized Gold function. 
Define maps $ \mu,\nu $ by $ \mu_{a}(x)=a^{2-p}x $ and $ \nu(a)=a $.
Then $ \tilde{B}_{f,\mu,\nu}(x,a) $ is bilinear. 
\end{proposition}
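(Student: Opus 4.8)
The plan is to compute $\tilde{B}_{f,\mu,\nu}(x,a)$ explicitly using the formula for $\tilde{B}_f$ already established in Proposition~\ref{GGF}~(ii), and then verify directly that the chosen rescaling turns it into a bilinear form. First I would recall that for the generalized Gold function $f(x) = x^{p^i+p-1}$, part~(ii) of Proposition~\ref{GGF} (with $(i_2,\dots,i_p) = (i,0,\dots,0)$, so that $d = p^i+p-1$) gives
\begin{align*}
\tilde{B}_f (x,a) = (p-1)\left( a^{d-1} x + a^{d-p^i} x^{p^i} + (p-2)\, a^{d-1} x \right),
\end{align*}
that is, the two distinct exponents appearing are $p^{i_1} = 1$ (with multiplicity $p-1$ among the indices $i_2,\dots,i_p,i_1$) and $p^i$ (with multiplicity $1$). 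Collecting terms, $\tilde{B}_f(x,a)$ is a sum of a term proportional to $a^{d-1}x$ and a term proportional to $a^{d-p^i}x^{p^i}$, with coefficients in $\mathbb{F}_p$. The exact constants are not important for the argument; what matters is that $\tilde{B}_f(x,a)$ is homogeneous of degree $d$ jointly in $x$ and $a$, and linear in $x$ — both of which follow from Proposition~\ref{GGF} and Proposition~\ref{prop:ch_algdeg} since $d^\circ(f) = p$.

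Next I would substitute $\nu(a) = a$ and apply $\mu_a(y) = a^{2-p}y$: by definition of $\tilde{B}_{f,\mu,\nu}$ for $a \neq 0$,
\begin{align*}
\tilde{B}_{f,\mu,\nu}(x,a) = \mu_a\big(\tilde{B}_f(x,a)\big) = a^{2-p}\left( c_0\, a^{d-1} x + c_1\, a^{d-p^i} x^{p^i} \right)
= c_0\, a^{d+1-p}\, x + c_1\, a^{d+2-p-p^i}\, x^{p^i},
\end{align*}
for suitable $c_0, c_1 \in \mathbb{F}_p$. Now $d+1-p = p^i$ and $d+2-p-p^i = 1$, so this equals $c_0\,(a x)^{\,?}$ — more precisely, the first term becomes $c_0\, a^{p^i} x$ and the second becomes $c_1\, a\, x^{p^i}$. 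These are not yet of the form $(ax)^{p^i}$, so the cleanest route is to observe that each monomial $a^{p^i} x$ and $a x^{p^i}$ is separately bilinear in $(x,a)$: it is $\mathbb{F}_p$-linear in $x$ for fixed $a$ (as $x \mapsto a^{p^i}x$ and $x \mapsto a x^{p^i}$ are both additive and $\mathbb{F}_p$-homogeneous), and $\mathbb{F}_p$-linear in $a$ for fixed $x$ (as $a \mapsto a^{p^i} x$ is additive and $\mathbb{F}_p$-homogeneous, and likewise $a \mapsto a x^{p^i}$). Hence the sum $\tilde{B}_{f,\mu,\nu}(x,a)$ is bilinear, and since it vanishes at $a=0$ as well (matching the definition), bilinearity holds on all of $F \times F$.

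The only subtlety, and the step I would be most careful about, is bookkeeping the coefficient $p-1$ and the combinatorial multiplicity in Proposition~\ref{GGF}~(ii): one must correctly identify that among the $p$ exponents $1, p^i, 1, \dots, 1$ (one copy of $p^i$ and $p-1$ copies of $1$, after setting $i_1 = 0$), the term $[f]^p$ contributes, via the symmetric-sum formula in Lemma~\ref{lem:order_algdeg_mono}, coefficients that land in $\mathbb{F}_p$ and in particular are nonzero exactly when they should be. But this is purely a matter of unwinding the already-proved formula; no new estimate or construction is needed, so the proof is essentially a direct verification parallel to the preceding proposition, with the exponent arithmetic $a^{2-p}\cdot a^{d-1} = a^{p^i}$ and $a^{2-p}\cdot a^{d-p^i} = a$ being the key simplification.
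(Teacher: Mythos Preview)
Your proposal is correct and follows essentially the same route as the paper: invoke Proposition~\ref{GGF}~(ii) to obtain $\tilde{B}_f(x,a)$ explicitly, multiply by $a^{2-p}$, and use the exponent identities $d+1-p=p^i$ and $d+2-p-p^i=1$ to see that only the bilinear monomials $a^{p^i}x$ and $a\,x^{p^i}$ remain. The paper simply carries the explicit coefficients (obtaining $\tilde{B}_{f,\mu,\nu}(x,a)=-a x^{p^i}+a^{p^i}x$), whereas you leave them as unspecified $c_0,c_1\in\mathbb{F}_p$; this is harmless for the bilinearity claim, though tracking them as the paper does removes the slightly meandering detour about whether one needs the form $(ax)^{p^i}$.
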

\begin{proof}
By (ii) in Proposition \ref{GGF}, we have 
$
\tilde{B}_f (x, a) = - a^{p-1} x^{p^i} + a^{p^i + p-2} x
$. 
Hence we get 
\begin{align*}
\tilde{B}_f (x, a) =(\mu_{a}\circ \tilde{B}_{f})(x,\nu(a)) 
= a^{2-p} \left( - a^{p-1} x^{p^i} + a^{p^i + p-2} x \right) 
= - a x^{p^i} + a^{p^i} x, 
\end{align*}
which is bilinear. 
\end{proof}

\begin{proposition}\label{prop:mutually different three elements}
Let $ f $ be a GAPN function with $ d^{\circ}(f)=p $ and $ \mu,\nu $ as above.  Suppose that $ \tilde{B}_{f,\mu,\nu} $ is bilinear. 
Then the following hold: 
\begin{enumerate}[label=(\arabic*)]
\item $ \mathbb{F}_{p}\nu(a) = \mathbb{F}_{p}\nu(ia) $ for any $a \in F$ and $i \in \mathbb{F}_{p}^{\times} $. 
\item Three mutually different elements $ a,b,c \in F $ lie on the same line if and only if $ \nu(a-b) $ and $ \nu(a-c) $ are linearly dependent. 
\end{enumerate}
\end{proposition}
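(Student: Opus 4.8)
The plan is to phrase everything in terms of the bilinear form $B := \tilde{B}_{f,\mu,\nu}$ together with the identity, recalled just before the statement, that $\Set{x \in F \mid B(x,a) = 0} = \mathbb{F}_p\nu(a)$ for every $a \in F^{\times}$ (this uses $d^{\circ}(f) \le p$ and that $f$ is a GAPN function, via Proposition \ref{lem:alg_criterion2}). Two auxiliary facts are all that is needed beyond this: since $\nu$ is a permutation fixing $0$, one has $\nu(w) \ne 0$ whenever $w \ne 0$; and each kernel $\Set{x \mid B(x,a) = 0}$ is exactly $1$-dimensional.

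For (1), fix $a \in F^{\times}$ and $i \in \mathbb{F}_p^{\times}$. Linearity of $B$ in its second slot gives $B(x,ia) = i\,B(x,a)$, so the equations $B(x,ia)=0$ and $B(x,a)=0$ have the same solution set in $x$; reading off this common solution set with the identity above yields $\mathbb{F}_p\nu(ia) = \mathbb{F}_p\nu(a)$. The case $a = 0$ is trivial since $\nu(0) = 0$.

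For the forward implication of (2): if the mutually different $a,b,c$ are collinear, then (as $a \ne b$) $a - c = i(a-b)$ for some $i \in \mathbb{F}_p^{\times}$, and part (1) gives $\mathbb{F}_p\nu(a-c) = \mathbb{F}_p\nu(i(a-b)) = \mathbb{F}_p\nu(a-b)$, so $\nu(a-b)$ and $\nu(a-c)$ are linearly dependent. For the converse I would argue contrapositively. Assume $u := a-b$ and $v := a-c$ are $\mathbb{F}_p$-independent, put $W := \mathbb{F}_p u + \mathbb{F}_p v$ (a $2$-dimensional subspace of $F$), and suppose toward a contradiction that $\nu(u),\nu(v)$ are dependent, so that $K := \mathbb{F}_p\nu(u) = \mathbb{F}_p\nu(v)$ with $\dim K = 1$. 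Then $B(x,u) = B(x,v) = 0$ for all $x \in K$, and bilinearity in the second slot upgrades this to $B(x,w) = 0$ for all $x \in K$ and all $w \in W$. Hence $K \subseteq \Set{x \mid B(x,w) = 0} = \mathbb{F}_p\nu(w)$ for every nonzero $w \in W$, and equality of dimensions forces $\nu(w) \in \mathbb{F}_p\nu(w) = K$. Thus $\nu$ maps the $p^2-1$ nonzero elements of $W$ into the $p-1$ nonzero elements of $K$, contradicting injectivity of $\nu$; therefore $\nu(u),\nu(v)$ are independent, which is the contrapositive of the converse.

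The one genuine obstacle is the converse of (2): one must "undo" $\nu$ at the level of lines through $0$. What makes this possible is precisely that $B(\,\cdot\,,a)$ has a $1$-dimensional kernel for every $a \ne 0$ — i.e.\ the GAPN hypothesis — together with injectivity of the permutation $\nu$, which supplies the counting contradiction. Everything else is formal manipulation with the bilinear form, and the argument never uses the explicit shape of $\mu,\nu$, only that $\nu$ is a permutation fixing $0$ and that $\tilde{B}_{f,\mu,\nu}$ is bilinear.
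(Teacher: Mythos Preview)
Your proof is correct and follows the same approach as the paper for part (1) and for the forward direction of (2): both use bilinearity in the second slot together with the kernel identity $\Set{x \mid \tilde{B}_{f,\mu,\nu}(x,a)=0}=\mathbb{F}_p\nu(a)$.

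For the converse of (2) the paper simply writes ``The converse is similar.'' The intended argument is slightly more direct than yours: by part (1) and injectivity of $\nu$, the restriction of $\nu$ to $\mathbb{F}_p^{\times}(a-c)$ is a bijection onto $\mathbb{F}_p^{\times}\nu(a-c)$; so if $\nu(a-b)=j\nu(a-c)$ for some $j\in\mathbb{F}_p^{\times}$, there exists $i\in\mathbb{F}_p^{\times}$ with $\nu(i(a-c))=j\nu(a-c)=\nu(a-b)$, whence $a-b=i(a-c)$. Your contrapositive argument via the two-dimensional subspace $W=\mathbb{F}_p u+\mathbb{F}_p v$ is a valid alternative that reaches the same pigeonhole contradiction one dimension higher; it uses exactly the same ingredients (bilinearity, the one-dimensional kernel property from the GAPN hypothesis, and injectivity of $\nu$), so the two arguments are essentially equivalent in spirit.
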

\begin{proof}
(1) Since $ \tilde{B}_{f,\mu,\nu} $ is bilinear, we have that $ \tilde{B}_{f,\mu,\nu}(x,a)=0 $ if and only if $ \tilde{B}_{f,\mu,\nu}(x,ia) $ for any $a \in F$ and $i \in \mathbb{F}_{p}^{\times} $. 
Hence 
\begin{align*}
\mathbb{F}_{p}\nu(a) 
= \Set{x \in F | \tilde{B}_{f,\mu,\nu}(x,a)=0}
= \Set{x \in F | \tilde{B}_{f,\mu,\nu}(x,ia)=0}
= \mathbb{F}_{p} \nu(ia). 
\end{align*}

(2) Suppose that mutually different elements $ a,b,c \in F $ lie on the same line. 
Then there exists $ i \in \mathbb{F}_{p}^{\times} $ such that $ a-b=i(a-c) $. 
We have $ \nu(a-b)=\nu(i(a-c)) $. 
By (1), there exists $ j \in \mathbb{F}_{p}^{\times} $ such that $ \nu(i(a-c))=j\nu(a-c) $. 
Hence we have $ \nu(a-b)=j\nu(a-c) $. 
Thus $ \nu(a-b) $ and $ \nu(a-c) $ are linearly dependent. 
The converse is similar. 
\end{proof}

Let $ f $ be a GAPN function with $ d^{\circ}(f)=p $ and $ \mu,\nu $ as above.  Suppose that $ \tilde{B}_{f,\mu,\nu} $ is bilinear.
For any $ a \in F $, we define 
\begin{align*}
X_{f,\mu,\nu}(a) \coloneqq \Set{(x,\tilde{B}_{f,\mu,\nu}(x,a)) | x \in F} \subset F \oplus F. 
\end{align*}
The bilinearity of $ \tilde{B}_{f,\mu,\nu} $ implies that $ X_{f,\mu,\nu}(a) $ is an 
$ n $-dimensional subspace in $ F \oplus F $. 
Let $ M \subset F $ be a set in which three mutually different elements do not lie on the same line. 
Let $ \mathcal{S}_{f,\mu,\nu,M} $ denote the collection of subspaces $ X_{f,\mu,\nu}(a) $, where $ a \in M $. 
\begin{theorem}
Suppose that $ n \geq 2 $. 
Then the collection $ \mathcal{S}_{f,\mu,\nu,M} $ is an $ (n-1) $-dimensional dual arc. 
\end{theorem}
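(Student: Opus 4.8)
The plan is to verify the two dual arc axioms directly for the family $\mathcal{S}_{f,\mu,\nu,M}=\{X_{f,\mu,\nu}(a)\mid a\in M\}$, using the bilinearity of $\tilde{B}_{f,\mu,\nu}$ and the GAPN characterization from Proposition \ref{lem:alg_criterion2}, together with the two structural facts about $\nu$ from Proposition \ref{prop:mutually different three elements}. First I would record that each $X_{f,\mu,\nu}(a)$ is an $n$-dimensional subspace of $F\oplus F$, which is already noted just before the statement since $x\mapsto(x,\tilde{B}_{f,\mu,\nu}(x,a))$ is injective $\mathbb{F}_p$-linear. Then, for distinct $a,b\in M$, I would compute $X_{f,\mu,\nu}(a)\cap X_{f,\mu,\nu}(b)$: a point lies in the intersection iff $\tilde{B}_{f,\mu,\nu}(x,a)=\tilde{B}_{f,\mu,\nu}(x,b)$, i.e. $\tilde{B}_{f,\mu,\nu}(x,a-b)=0$ by bilinearity in the second variable. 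By the GAPN criterion, $\{x\in F\mid \tilde{B}_{f,\mu,\nu}(x,a-b)=0\}=\mathbb{F}_p\nu(a-b)$, which is a $1$-dimensional space (here $a\ne b$ and $\nu$ fixes only $0$, so $\nu(a-b)\ne 0$); hence $\dim(X_{f,\mu,\nu}(a)\cap X_{f,\mu,\nu}(b))=1$, giving axiom (i).

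Next I would handle axiom (ii): for three mutually distinct $a,b,c\in M$, a point $(x,y)$ in $X_{f,\mu,\nu}(a)\cap X_{f,\mu,\nu}(b)\cap X_{f,\mu,\nu}(c)$ must satisfy $x\in\mathbb{F}_p\nu(a-b)\cap\mathbb{F}_p\nu(a-c)$. If $x\ne 0$ then $\nu(a-b)$ and $\nu(a-c)$ would be linearly dependent over $\mathbb{F}_p$, which by part (2) of Proposition \ref{prop:mutually different three elements} forces $a,b,c$ to be collinear — contradicting the defining property of $M$. Hence $x=0$, and then $y=\tilde{B}_{f,\mu,\nu}(0,a)=0$ by linearity in the first variable, so the triple intersection is $\{0\}$. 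This establishes (ii). Finally I would note the cardinality-type remark is not needed here since the statement only claims a dual arc (not a dual hyperoval), so no count of $|M|$ is required; I would only remark that $n\ge 2$ guarantees $F\oplus F$ has dimension $2n\ge 4$ so that distinct $n$-dimensional subspaces meeting in dimension $1$ is not vacuous, and more importantly that $M$ can be taken nonempty/large enough for the collection to be interesting.

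The main obstacle, and the step I would be most careful about, is the reduction $\tilde{B}_{f,\mu,\nu}(x,a)=\tilde{B}_{f,\mu,\nu}(x,b)\iff\tilde{B}_{f,\mu,\nu}(x,a-b)=0$: this uses bilinearity in the \emph{second} argument, and one must make sure that the hypothesis "$\tilde{B}_{f,\mu,\nu}$ is bilinear" (as established for the two explicit choices of $\mu,\nu$ in the preceding propositions) really means bilinear jointly, i.e. $\mathbb{F}_p$-linear in each variable separately — which is exactly what was verified in those computations where $\tilde{B}_{f,\mu,\nu}(x,a)$ came out as $\sum_i c_i(xa)^{p^i}$ or $-ax^{p^i}+a^{p^i}x$, both bilinear forms. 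Granting that, the identification of the kernel-in-$x$ with $\mathbb{F}_p\nu(a)$ is precisely the GAPN reformulation stated right before the theorem, so the two axioms follow cleanly. I would also double-check the edge case that $a-b$, $a-c$, $b-c$ are all nonzero when $a,b,c$ are mutually distinct, so that $\nu$ is applied only to nonzero arguments and part (2) of Proposition \ref{prop:mutually different three elements} applies verbatim.
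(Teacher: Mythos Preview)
Your proposal is correct and follows essentially the same argument as the paper: reduce the pairwise intersection to the condition $\tilde{B}_{f,\mu,\nu}(x,a-b)=0$ via bilinearity in the second variable, invoke the GAPN characterization to identify this kernel with $\mathbb{F}_p\nu(a-b)$, and then use Proposition~\ref{prop:mutually different three elements}(2) together with the non-collinearity condition on $M$ to force the triple intersection to be zero. The only minor point is that the paper states the role of $n\ge 2$ more precisely than you do: since the pairwise intersection has dimension $1$ while each $X_{f,\mu,\nu}(a)$ has dimension $n$, the hypothesis $n\ge 2$ is exactly what guarantees that distinct $a,b\in M$ yield distinct subspaces, so that $\mathcal{S}_{f,\mu,\nu,M}$ genuinely has $|M|$ members.
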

\begin{proof}
Let $ a,b \in M $ be different elements. 
Suppose that $ (x,y) \in X_{f,\mu,\nu}(a) \cap X_{f,\mu,\nu}(b) $. 
Then we have $ y = \tilde{B}_{f,\mu,\nu}(x,a) = \tilde{B}_{f,\mu,\nu}(x,b) $. 
Hence $ \tilde{B}_{f,\mu,\nu}(x,a-b)=0 $. 
Therefore $ x \in \mathbb{F}_{p}\nu(a-b) $, 
and hence $ \dim (X_{f,\mu,\nu}(a) \cap X_{f,\mu,\nu}(b))=1 $. 
Since $n \geq 2$, $ X_{f,\mu,\nu}(a) $ is different from $ X_{f,\mu,\nu}(b) $. 

Next we suppose that $ a,b,c $ are mutually different elements in $ M $. 
Then by the above argument, 
$ X_{f,\mu,\nu}(a) $, $ X_{f,\mu,\nu}(b) $, $ X_{f,\mu,\nu}(c) $ 
are mutually different subspaces. 
On the other hand, since $a$, $b$, $c$ do not lie on the same line, 
$ \nu(a-b) $ and $ \nu(a-c) $ are linearly independent 
by Proposition \ref{prop:mutually different three elements}. 
Therefore 
\begin{align*}
X_{f,\mu,\nu}(a) \cap X_{f,\mu,\nu}(b) \cap X_{f,\mu,\nu}(c) \subset \mathbb{F}_{p} \nu(a-b) \cap \mathbb{F}_{p} \nu(a-c) = 0. 
\end{align*}
Hence $ \mathcal{S}_{f,\mu,\nu,M} $ is a dual arc. 
\end{proof}

\bibliographystyle{amsplain1}
\bibliography{bibfile}

\vspace{5mm}
\begin{table}[h]
\centering
\begin{tabular}{lll}
Masamichi Kuroda & \hspace*{10mm}& Shuhei Tsujie \\
Department of Mathematics & & Department of Mathematics \\
Hokkaido University & & Hokkaido University \\
Sapporo 060-0810 & & Sapporo 060-0810\\
Japan & & Japan  \\
m-kuroda@math.sci.hokudai.ac.jp & &  tsujie@math.sci.hokudai.ac.jp
\end{tabular}
\end{table}

\end{document}